\documentclass[pdflatex,sn-mathphys-num]{sn-jnl}

\usepackage{graphicx}%
\usepackage{multirow}%
\usepackage{amsmath,amssymb,amsfonts}%
\usepackage{amsthm}%
\usepackage{mathrsfs}%
\usepackage[title]{appendix}%
\usepackage{xcolor}%
\usepackage{textcomp}%
\usepackage{manyfoot}%
\usepackage{booktabs}%
\usepackage{algorithm}%
\usepackage{algorithmicx}%
\usepackage{algpseudocode}%
\usepackage{listings}%

\usepackage{url}

\theoremstyle{thmstyleone}%
\theoremstyle{thmstyletwo}%

\theoremstyle{thmstylethree}%
 \newtheorem{thm}{Theorem}[section]
 \newtheorem{cor}[thm]{Corollary}
 \newtheorem{lem}[thm]{Lemma}
 
 \theoremstyle{definition}
 
 \theoremstyle{remark}

 \numberwithin{equation}{section}

\begin{document}

\title[Minimality of the root functions]{Minimality of the root functions of Sturm-Liouville problems with a boundary condition depending linearly on an eigenparameter}


\author[1]{\fnm{Yagub} \sur{Aliyev}}\email{yaliyev@ada.edu.az}

\author*[2]{\fnm{Narmin} \sur{Aliyeva}}\email{nermin.aliyeva@idrak.edu.az}
\equalcont{These authors contributed equally to this work.}

\affil*[1]{\orgdiv{School of IT and Engineering}, \orgname{ADA University}, \orgaddress{\street{61 Ahmadbay Agha-Oglu Street}, \city{Baku }, \postcode{AZ1008}, \country{Azerbaijan}}}

\affil[2]{\orgdiv{Department of Differential Equations and Control Theory}, \orgname{Baku State University}, \orgaddress{\street{Academic Zahid Khalilov str. 23}, \city{Baku}, \postcode{AZ1148}, \country{Azerbaijan}}}


\abstract{We consider a Sturm--Liouville problem in which the spectral parameter appears linearly in one of the boundary conditions. The study focuses on the root functions of the problem, including eigenfunctions and associated functions corresponding to multiple eigenvalues. By employing the characteristic function of the boundary value problem, explicit representations are obtained for the biorthogonal system and for several special associated functions that play a crucial role in the spectral analysis. These representations allow previously established criteria for the basis and minimality properties of the system of root functions to be reformulated directly in terms of the characteristic function and its derivatives at the eigenvalues. As a consequence, the investigation of particular boundary value problems becomes considerably simpler. Several illustrative examples are analyzed to demonstrate the effectiveness of the proposed approach and to show its agreement with known results in the literature.}

\keywords{Sturm–Liouville problem, eigenparameter-dependent boundary conditions, characteristic function, root functions, associated functions, biorthogonal system, minimality.}

\pacs[MSC Classification]{34B24; 34L10.}

\maketitle

\section{Introduction}\label{sec1}

We consider the following Sturm-Liouville problem
\begin{equation}
  -y^{\prime \prime }+q(x)y=\lambda y,\ 0<x<1,  \label{(1.1)} 
\end{equation}
\begin{equation}
y(0)\cos \beta =y^{\prime }(0)\sin \beta ,\ 0\leq \beta <\pi ,
\label{(1.2)}
\end{equation}
\begin{equation}
(a\lambda + b)y(1) = (c\lambda + d)y{'}(1). \label{(1.3)}
\end{equation}
Here $a,b,c,d$ are reals and $ad-bc<0$. Note that the spectral parameter $\lambda $ is contained not just in the equation but also in the boundary condition. For simplicity $q(x)$ is assumed to be a continuous real valued function over the interval $[0,1]$. In our previous paper \cite{aliyev5} we studied this problem and gave a detailed list of previous works on the problem. In particular, in \cite{aliyev5} we obtained necessary and sufficient conditions for the basis properties of the root functions with one function removed. In \cite{aliyev5} these necessary and sufficient conditions were expressed in terms of special associated functions $y_{k+1}^*$, $y_{k+1}^\#$, and $y_{k+2}^\#$. In the current paper these conditions and the biorthogonal system of the root functions  are expressed directly through the characteristic function. The involvement of the characteristic function considerably simplifies the study of particular boundary value problems. To demonstrate this, we looked at the same examples which were considered in \cite{aliyev5} and solved them in a new way.

Similar problems were  studied in \cite{fulton,kerimov1, aliyev1, aliyev2, zaliyev1,benedek, li, guliyev, kerimov, mois, maris, moller, olgar}. In more general form of polynomial dependence on $\lambda$ was studied in \cite{rus} and \cite{shkalikov1}. The results of the current paper are in perfect agreement with the results in \cite{shkalikov2} (Theorem 3), where (\ref{(1.2)}) is replaced by $y'(0)=0$. As was pointed out in \cite{shkalikov2}, the cases where the system of root functions with one function removed does not form a minimal system are exceptional. The current paper explicitly determines these cases.

More general problems were studied in \cite{shkalikov2,kerimov0}, where the parameter $\lambda$ participated both in (\ref{(1.2)}) and (\ref{(1.3)}).

Following \cite{binding1}, we assume that the eigenvalues
$\{\lambda_n\}_{n\ge0}$ of problem (\ref{(1.1)})--(\ref{(1.3)})
form an infinite sequence whose only accumulation point is $+\infty$.
The eigenvalues are indexed according to non-decreasing real parts and
are counted with their algebraic multiplicities. Furthermore, exactly
one of the following situations occurs:

\begin{enumerate}
\item[(i)] all eigenvalues are real and simple;

\item[(ii)] all eigenvalues are real, with exactly one double eigenvalue,
while all remaining eigenvalues are simple;

\item[(iii)] all eigenvalues are real, with exactly one triple
eigenvalue, and all other eigenvalues being simple;

\item[(iv)] all eigenvalues are simple, and all eigenvalues are real
except for a single pair of complex conjugate eigenvalues.
\end{enumerate}
In \cite[Theorem~2.2]{binding2} it is shown that asymptotically the eigenvalues are
\begin{equation}
\lambda_n =
\begin{cases}
\left(n-1\right)^2\pi^2 + O(1), & \text{if } \beta \neq 0,\\[6pt]
\left(n-\dfrac{1}{2}\right)^2\pi^2 + O(1), & \text{if } \beta = 0.
\end{cases}\label{(1.4)}
\end{equation}

In \cite{aslanova, aslanova1} similar boundary value problems with operator coefficients were studied. In \cite{chein, liu} numerical methods  to calculate the eigenvalues were developed. 

\section{The eigenfunctions $y_{n}$.}\label{sec2}

Let $y(x,\lambda)$ be the solution of the boundary value problem  
\begin{equation}
-y^{\prime \prime }+q(x)y=\lambda y,  \label{(2.1)}
\end{equation}
\begin{equation}
y^{\prime }(0)=\cos \beta ,\ \   y(0)=\sin \beta \label{(2.2)}
\end{equation}
We defined the characteristic function as 
\begin{equation}
\omega(\lambda) = (a\lambda + b)y(1,\lambda) - (c\lambda + d)y{'}(1,\lambda).\label{(2.3)}
\end{equation}
By (\ref{(1.1)})-(\ref{(1.3)}), if \(\omega(\lambda_n)=0\), then $y(x,\lambda_n)=y_{n}$ is an eigenfunction corresponding to $\lambda=\lambda_n$, i.e.
\begin{equation}
-y_{n}^{\prime \prime }+q(x)y_{n}=\lambda _{n}y_{n},  \label{(2.4)}
\end{equation}
\begin{equation}
y_{n}^{\prime }(0)\sin \beta =y_{n}(0)\cos \beta ,  \label{(2.5)}
\end{equation}
\begin{equation}
(a\lambda _{n}+b)y_{n}(1)=(c\lambda _{n}+d)y^{\prime }_{n}(1).  \label{(2.6)}
\end{equation}
Note that  $y(x,\lambda)$ is normalized and therefore    $\lim_{\lambda \to \lambda_n}y(x,\lambda)=y_n$ uniformly over $0<x<1$. Similarly, let $y(x,\mu)$ be the solution of (\ref{(2.1)}) and (\ref{(2.2)}) corresponding to $\mu$. 
By Lagrange's identity
$$
\frac{d}{dx}(y(x,\lambda)\overline{y^{\prime }(x,\mu)}-y^{\prime
}(x,\lambda)\overline{y(x,\mu)})= (\lambda - \overline
{\mu})y(x,\lambda)\overline{y(x,\mu)}.
$$
By integrating both sides of this equality from 0 to 1, we obtain
\begin{equation}
(\lambda - \overline
{\mu})\left(y(\cdot,\lambda),y(\cdot,\mu)\right)={\left. {(y(x,\lambda)\overline{y^{\prime }(x,\mu)}-y^{\prime
}(x,\lambda)\overline{y(x,\mu)}) } \right|}_{0}^{1},
\label{(2.7)} \end{equation}
where $(\cdot ,\cdot )$ as usual means the inner product in $L_{2}(0,1)$.
Simplifying this equality  using (\ref{(2.2)}) and (\ref{(2.3)}) we obtain that if $\lambda \ne\overline
{\mu}$ and $\lambda,
{\mu}\ne-\frac{d}{c}$, then
$$
\left(y(\cdot,\lambda), y(\cdot,\mu)\right)
= -(ad - bc)\frac{y(1,\lambda)\overline{y(1,\mu)}}{(c\lambda + d)(c\overline{\mu} + d)}
$$
\begin{equation}
- \frac{\omega(\overline{\mu})}{\lambda - \overline{\mu}} \cdot \frac{y(1,\lambda)}{c\overline{\mu} + d}
+ \frac{\omega(\lambda)}{\lambda - \overline{\mu}} \cdot \frac{\overline{y(1,\mu)}}{c\lambda + d}.\label{(2.8)}
\end{equation}
In a similar way we can show that 
if $\lambda \ne\overline
{\mu}$ and $\lambda,
{\mu}\ne-\frac{b}{a}$, then
\[
\left(y(\cdot,\lambda), y(\cdot,\mu)\right)
= -(ad - bc)\frac{y^\prime(1,\lambda)\overline{y^\prime(1,\mu)}}{(a\lambda + b)(a\overline{\mu} + b)}
\]
\begin{equation}
- \frac{\omega(\overline{\mu})}{\lambda - \overline{\mu}} \cdot \frac{y^\prime(1,\lambda)}{a\overline{\mu} + b}
+ \frac{\omega(\lambda)}{\lambda - \overline{\mu}} \cdot \frac{\overline{y^\prime(1,\mu)}}{a\lambda + b}.\label{(2.9)}
\end{equation}
If  $\lambda=\lambda_n=-\frac{d}{c}$ in (\ref{(2.7)}), then  $y(1,\lambda)=y_n(1)=0$ and therefore  $$
(\lambda_n - \overline
{\mu})\left(y_n(\cdot),y(\cdot,\mu)\right)= -y_n^{\prime
}(1)\overline{y(1,\mu)},
 $$
from which we obtain 
\begin{equation}
\left(y_n(\cdot), y(\cdot,\mu)\right)
= -(ad - bc)\frac{y_n^\prime(1)\overline{y(1,\mu)}}{(a\lambda_n + b)(c\overline{\mu} + d)}. \label{(2.10)}
\end{equation}
Let us introduce the notation 
\begin{equation}
\mathfrak{A}(y_n) =
\begin{cases}
\displaystyle  
\dfrac{y_n(1)}{c\lambda_n + d}, 
& \text{if } \lambda_n \neq -\dfrac{d}{c}, \\[10pt]
\displaystyle \ 
\dfrac{y_n'(1)}{a\lambda_n + b}, 
& \text{if } \lambda_n = -\dfrac{d}{c}.
\end{cases}\label{(2.11)}
\end{equation}
\begin{lem}
If $\lambda _{n}\ne \overline{\lambda _{m}}$, then 
\begin{equation}
\left( y_{n},y_{m} \right) = - (ad-bc)\mathfrak{A}(y_n) \overline{\mathfrak{A}(y_m)} .\label{(2.12)}
\end{equation}
\end{lem}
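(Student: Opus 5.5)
The plan is to specialize the identities (\ref{(2.8)}), (\ref{(2.9)}) and (\ref{(2.10)}) to $\lambda=\lambda_n$, $\mu=\lambda_m$. We use that $y(x,\lambda_n)=y_n$, that $y(x,\lambda_m)=y_m$, and that $\omega(\lambda_n)=0$. Since $q,a,b,c,d,\beta$ are real, we have $\overline{y(x,\mu)}=y(x,\overline{\mu})$, hence $\overline{\omega(\mu)}=\omega(\overline{\mu})$. In particular $\omega(\overline{\lambda_m})=\overline{\omega(\lambda_m)}=0$. The hypothesis $\lambda_n\ne\overline{\lambda_m}$ is exactly what allows the division by $\lambda-\overline{\mu}$ in (\ref{(2.7)}). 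We split into cases according to whether $\lambda_n$ and $\lambda_m$ equal $-d/c$.

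\emph{Case 1: $\lambda_n\ne -d/c$ and $\lambda_m\ne -d/c$.} Put $\lambda=\lambda_n$, $\mu=\lambda_m$ in (\ref{(2.8)}). Both terms containing $\omega(\lambda_n)$ and $\omega(\overline{\lambda_m})$ vanish. What remains is $-(ad-bc)\,\frac{y_n(1)}{c\lambda_n+d}\,\overline{\frac{y_m(1)}{c\lambda_m+d}}$, using that $c,d$ are real so $\overline{c\lambda_m+d}=c\overline{\lambda_m}+d$. This is (\ref{(2.12)}).

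\emph{Case 2: $\lambda_n=-d/c$.} We apply (\ref{(2.10)}) with $\mu=\lambda_m$. If $\lambda_m\ne -d/c$, the right side is exactly $-(ad-bc)\mathfrak{A}(y_n)\overline{\mathfrak{A}(y_m)}$. For (\ref{(2.10)}) itself to make sense we need $a\lambda_n+b\ne0$. This holds: if both $c\lambda_n+d=0$ and $a\lambda_n+b=0$, then $ad-bc=0$ (eliminate $\lambda_n$), contradicting $ad-bc<0$. If instead $\lambda_m=-d/c$ too, then $\lambda_n=\lambda_m$ is real, so $\lambda_n=\overline{\lambda_m}$, which is excluded. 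The case $\lambda_m=-d/c$, $\lambda_n\ne-d/c$ follows symmetrically. Take (\ref{(2.10)}) with the roles of $n,m$ interchanged and conjugate it, using $(y_n,y_m)=\overline{(y_m,y_n)}$ and the reality of $ad-bc$.

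The main obstacle is justifying the degenerate denominators, i.e. checking that $c\lambda_m+d$, $a\lambda_n+b$ are nonzero in the relevant branches. For complex $\lambda$ this is automatic, since $-d/c$ and $-b/a$ are real. One must also double-check the derivation of (\ref{(2.10)}). When $y_n(1)=0$, the boundary term at $1$ in (\ref{(2.7)}) equals $-y_n'(1)\overline{y_m(1)}$. The boundary condition for $\mu=\lambda_m$ gives $\overline{y_m'(1)}(c\overline{\lambda_m}+d)=(a\overline{\lambda_m}+b)\overline{y_m(1)}$. Together these let one write $\lambda_n-\overline{\lambda_m}$ as a multiple of $(ad-bc)/\bigl((a\lambda_n+b)(c\overline{\lambda_m}+d)\bigr)$, using $c\lambda_n+d=0$. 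After that, the lemma is pure bookkeeping.
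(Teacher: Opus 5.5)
Your argument is correct and is essentially the paper's proof. You use (\ref{(2.8)}) with both $\omega$-terms vanishing when neither eigenvalue equals $-d/c$, and otherwise (\ref{(2.10)}) with $\mu=\lambda_m$, with roles swapped and conjugated for the symmetric case. The extra checks you supply are details the paper leaves implicit: $\omega(\overline{\lambda_m})=\overline{\omega(\lambda_m)}=0$, $a\lambda_n+b\neq 0$ from $ad-bc<0$, and the exclusion of $\lambda_n=\lambda_m=-d/c$ by the hypothesis. One small correction: (\ref{(2.10)}) needs no boundary condition at $\lambda_m$. It only uses the identity $1/(\lambda_n-\overline{\mu})=(ad-bc)/\bigl((a\lambda_n+b)(c\overline{\mu}+d)\bigr)$, which is the reciprocal of the relation you described.
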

\begin{proof}
If $\lambda_n,\lambda_m\neq-\frac{d}{c}$, then we substitute  $\lambda=\lambda_n$ and $\mu=\lambda_m$ in (\ref{(2.8)}) and use the fact that $\omega(\lambda_n)=\omega(\lambda_m)=0$.
If \(\lambda_{n}= - \frac{d}{c} \neq \lambda_{m}\), then we substitute $\mu=\lambda_m$ in (\ref{(2.10)}). The remaining case  \(\lambda_{m}= - \frac{d}{c} \neq \lambda_{n}\) is considered similarly.
\end{proof}
\begin{cor}
If $\lambda _{r}$ is a non-real eigenvalue, then
\begin{equation} 
  {\left\| {y_{r} }\right\|}^{2}_{2} = -(ad - bc) {\left| {\mathfrak{A}(y_{r})} \right|^2}\label{(2.13)}
 \end{equation}
\end{cor}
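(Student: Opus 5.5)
The plan is to apply the preceding Lemma with $n=r$ and $m$ chosen so that the left-hand side of (\ref{(2.12)}) becomes the norm of $y_r$.

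\textbf{Choosing the partner eigenvalue.} Since $\lambda_r$ is non-real, we are in case (iv). The coefficients $a,b,c,d$ and $q$ are real, so $\omega(\overline{\lambda})=\overline{\omega(\lambda)}$. Hence $\overline{\lambda_r}$ is also an eigenvalue, say $\overline{\lambda_r}=\lambda_m$ with $m\ne r$.

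\textbf{Identifying the eigenfunction.} The initial data (\ref{(2.2)}) are real and $q$ is real, so uniqueness for the Cauchy problem gives $y(x,\overline{\lambda})=\overline{y(x,\lambda)}$. Therefore $y_m=\overline{y_r}$. In particular $y_m(1)=\overline{y_r(1)}$ and $y_m'(1)=\overline{y_r'(1)}$.

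\textbf{Applying the Lemma.} Take $n=r$ and this $m$. The hypothesis $\lambda_n\ne\overline{\lambda_m}$ becomes $\lambda_r\ne\lambda_r$, which fails, so this choice is not allowed. Instead I take $m=r$ itself. Then $\overline{\lambda_m}=\overline{\lambda_r}\ne\lambda_r$ precisely because $\lambda_r$ is non-real, so the Lemma applies. It gives
\[
(y_r,y_r)=\|y_r\|_2^2=-(ad-bc)\,\mathfrak{A}(y_r)\overline{\mathfrak{A}(y_r)}=-(ad-bc)|\mathfrak{A}(y_r)|^2,
\]
which is (\ref{(2.13)}).

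\textbf{Checking the definition of $\mathfrak{A}$.} The Lemma's proof substitutes $\lambda=\mu=\lambda_r$ into (\ref{(2.8)}); that is valid because $\lambda_r\ne\overline{\lambda_r}$ and $\omega(\lambda_r)=0$. Since $-d/c$ is real (or $c=0$, in which case the condition is vacuous), $\lambda_r\ne-d/c$ automatically. So the first branch of (\ref{(2.11)}) is the relevant one and the edge case in the definition of $\mathfrak{A}$ never arises.

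I expect no real obstacle; the conjugate-symmetry discussion in the first two steps is unnecessary. As a consistency remark, the identity forces $\mathfrak{A}(y_r)\ne0$, since $y_r\not\equiv0$ and $ad-bc<0$ make the right-hand side positive exactly when $\mathfrak{A}(y_r)\neq 0$.
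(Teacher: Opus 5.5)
Your argument is correct and matches the paper's: the corollary is Lemma~2.1 with $n=m=r$, which applies precisely because $\lambda_r\neq\overline{\lambda_r}$, and the first branch of (\ref{(2.11)}) is the relevant one since $-d/c$ is real. You can drop the opening detour through $\overline{\lambda_r}$, though it is not wholly superfluous: the symmetry $\omega(\overline{\lambda})=\overline{\omega(\lambda)}$ is what makes the $\omega(\overline{\mu})$ term of (\ref{(2.8)}) vanish inside the Lemma's proof.
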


\begin{lem} If $\lambda _{n}$ is real 
\begin{equation}
{\left\| {y_{n} } \right\|}^{2}_{2}=-(ad-bc)\mathfrak{A}^2(y_{n})+\mathfrak{A}(y_{n})\omega^\prime(\lambda_n).
\label{(2.14)}
\end{equation}
\end{lem}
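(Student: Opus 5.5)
The idea is to let $\mu$ tend to $\lambda_n$ in the identity (\ref{(2.8)}), or in (\ref{(2.10)}) in the exceptional case. Because $\lambda_n$ is real, we can take $\mu$ real, $\mu\neq\lambda_n$, with $\mu\to\lambda_n$. For real $\mu$ the solution $y(x,\mu)$ is real-valued, since $q$ is real and the initial data (\ref{(2.2)}) are real. So all conjugations may be dropped, and $\omega(\overline{\mu})=\omega(\mu)$.

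\textbf{Case $\lambda_n\neq -\frac{d}{c}$.} Put $\lambda=\lambda_n$ in (\ref{(2.8)}) and use $\omega(\lambda_n)=0$. For real $\mu$ near $\lambda_n$ (so that $c\mu+d\neq0$) this gives
\[
\left(y_n, y(\cdot,\mu)\right) = -(ad-bc)\frac{y_n(1)\,y(1,\mu)}{(c\lambda_n+d)(c\mu+d)} - \frac{\omega(\mu)-\omega(\lambda_n)}{\lambda_n-\mu}\cdot\frac{y_n(1)}{c\mu+d}.
\]
Now let $\mu\to\lambda_n$.
\begin{itemize}
\item On the left, $y(x,\mu)\to y_n$ uniformly on $[0,1]$, so the left side tends to $\|y_n\|_2^2$.
\item $y(1,\mu)\to y_n(1)$, so the first term on the right tends to $-(ad-bc)\mathfrak{A}^2(y_n)$.
\item The difference quotient tends to $-\omega'(\lambda_n)$. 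This uses that $\omega$ is entire, since $y(1,\lambda)$ and $y'(1,\lambda)$ are entire in $\lambda$.
\item Hence the second term on the right tends to $+\omega'(\lambda_n)\,\dfrac{y_n(1)}{c\lambda_n+d}=\mathfrak{A}(y_n)\omega'(\lambda_n)$.
\end{itemize}
This yields (\ref{(2.14)}).

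\textbf{Case $\lambda_n=-\frac{d}{c}$.} Here $y_n(1)=0$, and (\ref{(2.10)}) has no $\omega$-term, so a limit in it would lose the $\omega'$ contribution. Instead we return to (\ref{(2.7)}) with $\lambda=\lambda_n$ and real $\mu$. The boundary term at $0$ vanishes by (\ref{(2.2)}), which leaves
\[
(\lambda_n-\mu)(y_n,y(\cdot,\mu)) = -y_n'(1)\,y(1,\mu).
\]
We then eliminate $y(1,\mu)$ using the definition (\ref{(2.3)}): $(a\mu+b)y(1,\mu)=\omega(\mu)+(c\mu+d)y'(1,\mu)$.

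Note that $a\lambda_n+b\neq0$. Otherwise both coefficients in (\ref{(1.3)}) would vanish at $\lambda_n$, giving $ad-bc=0$, a contradiction. Also $c\mu+d=c(\mu-\lambda_n)$. Dividing by $\lambda_n-\mu$ then gives
\[
(y_n,y(\cdot,\mu)) = \frac{y_n'(1)}{a\mu+b}\left(\frac{\omega(\mu)-\omega(\lambda_n)}{\mu-\lambda_n}+c\,y'(1,\mu)\right).
\]
Letting $\mu\to\lambda_n$ yields
\[
\|y_n\|^2=\mathfrak{A}(y_n)\omega'(\lambda_n)+c\,\frac{y_n'(1)^2}{a\lambda_n+b}.
\]
It remains to check that $c\,\dfrac{y_n'(1)^2}{a\lambda_n+b}=-(ad-bc)\mathfrak{A}^2(y_n)$. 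This amounts to the identity
\[
c(a\lambda_n+b)=-(ad-bc),
\]
which holds since $c(a\lambda_n+b)=-ad+bc$ when $c\lambda_n=-d$.

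\textbf{Main obstacle.} The only delicate points are bookkeeping ones:
\begin{itemize}
\item justifying passage to the limit, which follows from uniform convergence and analyticity of $\omega$;
\item the exceptional case $\lambda_n=-d/c$, where (\ref{(2.10)}) must not be used directly.
\end{itemize}
I expect the sign verification in that exceptional case to be the step needing the most care.
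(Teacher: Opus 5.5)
Your proof is correct and is essentially the paper's argument. In the generic case you put $\lambda=\lambda_n$ in \eqref{(2.8)} and let $\mu\to\lambda_n$, which mirrors the paper's choice of $\mu=\lambda_n$ and $\lambda\to\lambda_n$ in \eqref{(2.15)}; in the case $\lambda_n=-\frac{d}{c}$, eliminating $y(1,\mu)$ via \eqref{(2.3)} is exactly how the paper rewrites \eqref{(2.10)} as \eqref{(2.16)}, and your pre-limit identity coincides with \eqref{(2.16)} for real $\mu$ once $c(a\lambda_n+b)=-(ad-bc)$ is used (so \eqref{(2.10)} need not be avoided, since a direct limit there is only a $0/0$ form, not a loss of the $\omega'$ term).
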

\begin{proof}
If $\lambda_n\neq-\frac{d}{c}$, then we substitute  $\mu=\lambda_n$ in (\ref{(2.8)}) and obtain  
\begin{equation}
\left(y(\cdot,\lambda), y_n(\cdot)\right)
= -(ad - bc)\frac{y(1,\lambda)}{c\lambda + d}\mathfrak{A}(y_{n})
+ \frac{\omega(\lambda)}{\lambda - \lambda_n} \cdot \frac{{y_n(1)}}{c\lambda + d}.\label{(2.15)}
\end{equation}
By tending $\lambda\to\lambda_n$ we obtain (\ref{(2.14)}) in this case.
If \(\lambda_{n}= - \frac{d}{c}\), then we write (\ref{(2.10)}) as
\[
\left(y_n(\cdot), y(\cdot,\mu)\right)
= -(ad - bc)\mathfrak{A}(y_{n})\frac{{\omega(\overline{\mu})+(c\overline{\mu} + d)}\overline{y^\prime(1,\mu)}}{(a\overline{\mu} + b)(c\overline{\mu} + d)}
\]
\begin{equation}
= -(ad - bc)\mathfrak{A}(y_{n})\frac{\overline{y^\prime(1,\mu)}}{a\overline{\mu} + b}+\frac{y_n^\prime(1)}{a\overline{\mu}+b}\frac{\omega(\overline{\mu})}{\overline{\mu}-\lambda_n}. \label{(2.16)}
\end{equation}
By tending $\mu\to\lambda_n$ we obtain (\ref{(2.14)}) in this case.
\end{proof}

\begin{cor}
If $\lambda _{k}$ is a multiple eigenvalue, then 
\begin{equation}
{\left\| {y_{k}} \right\|}^{2}_{2}=-(ad-bc)\mathfrak{A}^2(y_{k}).
\label{(2.17)}
\end{equation}
\end{cor}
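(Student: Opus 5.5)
The corollary follows from Lemma (2.14) once we know $\omega'(\lambda_k)=0$. So the plan is to justify this vanishing and then substitute.

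First, a multiple eigenvalue is real, since by the list of cases (ii)–(iv) only real eigenvalues can be double or triple. Hence Lemma (2.14) applies to $\lambda_k$ and gives
\[
\|y_k\|_2^2=-(ad-bc)\mathfrak{A}^2(y_k)+\mathfrak{A}(y_k)\,\omega'(\lambda_k).
\]

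Second, we need $\omega'(\lambda_k)=0$. This is the only substantive point: we must identify the algebraic multiplicity of $\lambda_k$ with its multiplicity as a zero of the entire function $\omega$. Two ingredients make this work.

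\begin{itemize}
\item The solution $y(x,\lambda)$ defined by (2.1)–(2.2) is entire in $\lambda$ and never identically zero. Hence each eigenvalue is geometrically simple, and its eigenfunction is $y(x,\lambda_k)$.
\item The Keldysh chain is generated by derivatives in $\lambda$. Differentiating (2.1) in $\lambda$ gives
\[
-y_\lambda''+qy_\lambda-\lambda_k y_\lambda=y .
\]
Then $\partial_\lambda y(x,\lambda)|_{\lambda_k}$ satisfies the associated-function equation and the boundary condition at $0$, since (2.2) does not depend on $\lambda$. The $\lambda$-dependent condition (1.3) for the associated function holds exactly when $\omega'(\lambda_k)=0$. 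The analogous statement holds for higher derivatives and higher associated functions.
\end{itemize}

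This is the standard fact, presumably as in \cite{binding1}, that the algebraic multiplicity equals the order of the zero of $\omega$. So if $\lambda_k$ is at least double, then $\omega(\lambda_k)=\omega'(\lambda_k)=0$.

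Substituting $\omega'(\lambda_k)=0$ into (2.14) gives (2.17).

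Alternatively, the conclusion follows directly from (2.15) without invoking multiplicity theory. Letting $\lambda\to\lambda_k$ in (2.15) (or in (2.16) when $\lambda_k=-\frac dc$), the last term tends to $\omega'(\lambda_k)\mathfrak{A}(y_k)$. That term vanishes once the multiplicity of the zero of $\omega$ is at least two.
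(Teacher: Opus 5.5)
Your proposal is correct and follows the paper's implicit argument: a multiple eigenvalue is real, so (2.14) applies, and $\omega'(\lambda_k)=0$, which the paper likewise takes from (3.1)--(3.6). The step to make explicit is this: any associated function equals $\partial_\lambda y(\cdot,\lambda)|_{\lambda=\lambda_k}+Cy_k$, because their difference solves the homogeneous equation with the condition at $0$. Substituting it into (3.3) then gives $\omega'(\lambda_k)+C\omega(\lambda_k)=\omega'(\lambda_k)=0$.
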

\begin{lem}
If $\lambda_r$ and $\lambda_s = \overline{\lambda_r}$ ($s:=r+1$) form a conjugate pair of non-real eigenvalues, then

\begin{equation}
(y_{s},y_{r})=-(ad-bc)\mathfrak{A}^2(y_{s})+\mathfrak{A}(y_{s})\omega^\prime(\lambda_s).\label{(2.18)}
\end{equation}
\end{lem}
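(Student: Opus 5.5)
The plan is to mimic the proof of Lemma (2.14), but to pair the eigenfunction $y_s$ with the family $y(\cdot,\mu)$ and let $\mu\to\lambda_r$, so that $\overline{\mu}\to\lambda_s$. Since $q$ and $\beta$ are real, $y(x,\overline{\lambda})=\overline{y(x,\lambda)}$, so $y_s=\overline{y_r}$ and $\overline{y(1,\mu)}=y(1,\overline\mu)$. Note that $\lambda_s$ is non-real while $-d/c$ is real (or $c=0$). Hence $c\lambda_s+d\neq 0$, and we are always in the first case of (\ref{(2.11)}): $\mathfrak{A}(y_s)=y_s(1)/(c\lambda_s+d)$.

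First, take $\lambda=\lambda_s$ in (\ref{(2.8)}) and leave $\mu$ free near $\lambda_r$. For $\overline\mu\neq\lambda_s$ and $\mu\neq -d/c$, using $\omega(\lambda_s)=0$, this gives
\[
\left(y_s, y(\cdot,\mu)\right) = -(ad-bc)\,\mathfrak{A}(y_s)\,\frac{\overline{y(1,\mu)}}{c\overline{\mu}+d} - \frac{\omega(\overline{\mu})}{\lambda_s-\overline{\mu}}\cdot\frac{y_s(1)}{c\overline{\mu}+d}.
\]
This formula is the analogue of (\ref{(2.15)}).

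Second, let $\mu\to\lambda_r$. The left-hand side tends to $(y_s,y_r)$, because $y(x,\mu)\to y_r$ uniformly. For the right-hand side, $\overline{y(1,\mu)}=y(1,\overline\mu)\to y_s(1)$ and $c\overline\mu+d\to c\lambda_s+d$. Since $\omega$ is entire and $\omega(\lambda_s)=0$, we have $\omega(\overline\mu)/(\overline\mu-\lambda_s)\to\omega'(\lambda_s)$. The first term therefore tends to $-(ad-bc)\mathfrak{A}^2(y_s)$, and the second to $+\omega'(\lambda_s)\,y_s(1)/(c\lambda_s+d)=\mathfrak{A}(y_s)\omega'(\lambda_s)$. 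Together these give (\ref{(2.18)}).

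The only delicate point is the sign and conjugation bookkeeping in the second step. The term $-\omega(\overline\mu)/(\lambda_s-\overline\mu)$ must become $+\omega'(\lambda_s)$, and the conjugation identity $\overline{y(1,\mu)}=y(1,\overline{\mu})$ must be invoked correctly. Both rely on $q$ and $\beta$ being real and on the inner product being conjugate-linear in the second slot, exactly as in (\ref{(2.7)}).
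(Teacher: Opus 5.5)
Your proposal is correct and is essentially the paper's argument. Both specialize (\ref{(2.8)}) at one member of the conjugate pair and pass to the limit in the other variable, so that the difference quotient of $\omega$ at $\lambda_s$ yields $\omega'(\lambda_s)$. The paper fixes $\mu=\lambda_r$ and lets $\lambda\to\lambda_s$, whereas you fix $\lambda=\lambda_s$ and let $\mu\to\lambda_r$; this is only a mirror image of the same computation.
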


\begin{proof}
If we substitute $\mu=\lambda_r$ in (\ref{(2.8)}), then we obtain 
$$
\left(y(\cdot,\lambda), y_r(\cdot)\right)
= -(ad - bc)\frac{y(1,\lambda)}{c\lambda + d}\overline{\mathfrak{A}(y_r)}
+ \frac{\omega(\lambda)}{\lambda - \overline{\lambda_r}} \cdot \frac{\overline{y_r(1)}}{c\lambda + d}.
$$
By tending $\lambda\to\lambda_s$ we obtain (\ref{(2.17)}).
\end{proof}

\section{The first associated function \(y_{k + 1}\).}
If \(\lambda_{k}\) is a multiple  eigenvalue
(\(\lambda_{k} = \lambda_{k + 1}\le\lambda_{k + 2}\)), then the associated function
\(y_{k + 1}\) of the eigenfunction \(y_{k}\) is defined by
\begin{equation}
    {- y''_{k + 1}} + q(x)y_{k + 1} = \lambda_{k}y_{k + 1} + y_{k}, \label{(3.1)}
\end{equation}
\begin{equation}
y_{k + 1}(0)\cos\beta = y'_{k + 1}(0)\sin\beta, \label{(3.2)}
\end{equation}
\begin{equation}
\left( a\lambda_{k} + b \right)y_{k + 1}(1) + ay_{k}(1) = \left( c\lambda_{k} + d \right)y'_{k + 1}(1) + cy'_{k}(1). \label{(3.3)}
\end{equation}
By differentiating (\ref{(1.1)}), (\ref{(1.2)}), and (\ref{(2.3)}) with respect to $\lambda$ we obtain 
\begin{equation}
  -y_\lambda^{\prime \prime }+q(x)y_\lambda=\lambda y_\lambda+y,\ 0<x<1,  \label{(3.4)} 
\end{equation}
\begin{equation}
y_\lambda(0)\cos \beta =y_\lambda^{\prime }(0)\sin \beta ,\ 0\leq \beta <\pi ,
\label{(3.5)}
\end{equation}
\begin{equation}
 \omega^\prime(\lambda) = (a\lambda + b)y_\lambda(1,\lambda)+ay(1,\lambda) - (c\lambda + d)y_\lambda{'}(1,\lambda)-cy^\prime(1,\lambda), \label{(3.6)}   
\end{equation}
respectively. By (\ref{(3.1)})-(\ref{(3.6)}), \(\omega(\lambda_k)=\omega^\prime(\lambda_k)=0\), and $$\lim_{\lambda\to\lambda_k}y_\lambda(x,\lambda)=\tilde{y}_{k+1},$$uniformly over $x\in[0,1]$, where $\tilde{y}_{k+1}$ is a fixed associated function. Note that $y_{k + 1}$ is not unique and $y_{k + 1}=\tilde{y}_{k+1}+Cy_k$ for some constant $C$.
Let us define 
\begin{equation}
\mathfrak{A}(y_{k+1}) =
\begin{cases}
\displaystyle  
\frac{y_{k + 1}(1)}{c\lambda_{k}+d} - \frac{cy_{k}(1)}{{(c\lambda_{k}+d)}^{2}}, 
& \text{if } \lambda_k \neq -\dfrac{d}{c}, \\[10pt]
\displaystyle \ 
\frac{y'_{k + 1}(1)}{a\lambda_{k}+b} - \frac{ay'_{k}(1)}{{(a\lambda_{k}+b)}^{2}}, 
& \text{if } \lambda_k = -\dfrac{d}{c}.
\end{cases}\label{(3.7)}
\end{equation}
We define $\mathfrak{A}(\tilde{y}_{k+1})$ similarly. Since in the multiple eigenvalue cases there are no non-real eigenvalues, we will not write complex conjugates in the following formulas.
\begin{lem} Suppose that \(\lambda_{k}\) is a multiple  eigenvalue and \(\lambda_{n}\) is a simple  eigenvalue \((\lambda_{k}=\lambda_{k+1}\ne\lambda_{n})\). Then 
\begin{equation}
\left( y_{k + 1},y_{n} \right) = - (ad - bc)\mathfrak{A}(y_{k+1})\mathfrak{A}(y_{n}). \label{(3.8)}
\end{equation}
\end{lem}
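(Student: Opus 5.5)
The plan is to differentiate the identity (\ref{(2.15)}) with respect to $\lambda$ and then let $\lambda\to\lambda_k$. The differentiated identity will give $(\tilde y_{k+1},y_n)$. The general associated function $y_{k+1}=\tilde y_{k+1}+Cy_k$ is then handled by linearity.

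I treat first the case $\lambda_k\ne -d/c$ and $\lambda_n\ne-d/c$. Here (\ref{(2.8)}) with $\mu=\lambda_n$ (real, so no conjugates) gives, for $\lambda\ne\lambda_n$ near $\lambda_k$,
\[
(y(\cdot,\lambda),y_n)=-(ad-bc)\frac{y(1,\lambda)}{c\lambda+d}\mathfrak{A}(y_n)+\frac{\omega(\lambda)}{\lambda-\lambda_n}\cdot\frac{y_n(1)}{c\lambda+d}.
\]
Both sides are analytic in $\lambda$ near $\lambda_k$, since $\lambda_k\ne\lambda_n$ and $c\lambda_k+d\ne0$.

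I differentiate in $\lambda$. On the left, $\frac{d}{d\lambda}(y(\cdot,\lambda),y_n)=(y_\lambda(\cdot,\lambda),y_n)$, because $y_\lambda$ is continuous in $(x,\lambda)$ and we may differentiate under the integral. On the right, the derivative of the first term is
\[
-(ad-bc)\mathfrak{A}(y_n)\left(\frac{y_\lambda(1,\lambda)}{c\lambda+d}-\frac{c\,y(1,\lambda)}{(c\lambda+d)^2}\right).
\]
The second term involves $\omega(\lambda)$ and $\omega'(\lambda)$ multiplied by smooth factors, so it vanishes at $\lambda=\lambda_k$ because $\omega(\lambda_k)=\omega'(\lambda_k)=0$. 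Letting $\lambda\to\lambda_k$ and using $y_\lambda\to\tilde y_{k+1}$ and $y\to y_k$ uniformly, I obtain
\[
(\tilde y_{k+1},y_n)=-(ad-bc)\mathfrak{A}(\tilde y_{k+1})\mathfrak{A}(y_n).
\]
For $y_{k+1}=\tilde y_{k+1}+Cy_k$, I note from (\ref{(3.7)}) that $\mathfrak{A}(y_{k+1})=\mathfrak{A}(\tilde y_{k+1})+C\,y_k(1)/(c\lambda_k+d)=\mathfrak{A}(\tilde y_{k+1})+C\mathfrak{A}(y_k)$. Combining this with Lemma (\ref{(2.12)}) for $(y_k,y_n)$, linearity gives (\ref{(3.8)}).

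The remaining cases are handled the same way with different base formulas.
\begin{itemize}
\item If $\lambda_k=-d/c$, then $a\lambda_k+b\ne0$, since otherwise $ad-bc=0$ would follow. I use (\ref{(2.9)}) with $\mu=\lambda_n$, provided $\lambda_n\ne -b/a$. Differentiating gives the primed version of $\mathfrak{A}$ from (\ref{(3.7)}). The factor $\mathfrak{A}(y_n)$ then comes out in the form $y_n'(1)/(a\lambda_n+b)$.
\item I reconcile this with the definition (\ref{(2.11)}) for $y_n$ using $\omega(\lambda_n)=0$. Since $(a\lambda_n+b)y_n(1)=(c\lambda_n+d)y_n'(1)$, the two expressions for $\mathfrak{A}(y_n)$ agree whenever both denominators are nonzero.
\item If $\lambda_n=-d/c$, I instead use (\ref{(2.10)}) in the form (\ref{(2.16)}) with the roles swapped: $y_n$ is fixed and $\mu=\lambda$ is near $\lambda_k$. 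I differentiate in $\mu$. Because $\lambda_k\ne -d/c$ here, I can revert $\overline{y'(1,\mu)}/(a\mu+b)$ to $y(1,\mu)/(c\mu+d)$ up to a term proportional to $\omega(\mu)$, which vanishes to second order at $\lambda_k$.
\end{itemize}

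The main obstacle is bookkeeping rather than analysis. I must check in every case that the $\omega$-terms vanish after one differentiation, which needs $\omega(\lambda_k)=\omega'(\lambda_k)=0$. I must also check that the denominator switches between the $c\lambda+d$ and $a\lambda+b$ forms of $\mathfrak{A}$ are consistent, and that only $O(\omega)$ and $O(\omega')$ errors are introduced. The uniform convergence justifying the passage to the limit is already asserted in the text.
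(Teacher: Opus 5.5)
Your route is the paper's: differentiate the Lagrange-identity formula in $\lambda$ at $\lambda_k$, kill the $\omega$-terms with $\omega(\lambda_k)=\omega'(\lambda_k)=0$, and add $C$ times Lemma 2.1. The three cases you treat are fine. In the case $\lambda_k=-d/c$, $\lambda_n\neq -b/a$, using (\ref{(2.9)}) directly and then converting $y_n'(1)/(a\lambda_n+b)$ to $y_n(1)/(c\lambda_n+d)$ via the boundary condition is a slightly shorter version of the paper's detour through (\ref{(3.13)}).

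\textbf{Gap.} One case is missing: $\lambda_k=-d/c$ together with $\lambda_n=-b/a$, which can occur when $a\neq 0$. You exclude $\lambda_n=-b/a$ from your (\ref{(2.9)}) argument and never return to it, and none of your base formulas covers it. Formula (\ref{(2.8)}) is singular at $\lambda=\lambda_k=-d/c$. Formula (\ref{(2.9)}) cannot be evaluated at $\mu=\lambda_n=-b/a$. Formula (\ref{(2.10)}) fixes the eigenfunction at $-d/c$, which is now $y_k$, whereas you need to differentiate in that slot.

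\textbf{Repair.} This is what the paper does with (\ref{(3.14)}). Since $c\lambda_n+d=(ad-bc)/a\neq 0$, the boundary condition forces $y_n'(1)=0$. Then (\ref{(2.7)}) with $\mu=\lambda_n$ gives, for all $\lambda\neq\lambda_n$, the exact identity
\[
(y(\cdot,\lambda),y_n)=-\frac{y'(1,\lambda)\,y_n(1)}{\lambda-\lambda_n}=-(ad-bc)\,\frac{y'(1,\lambda)}{a\lambda+b}\,\mathfrak{A}(y_n).
\]
Here we used $\lambda-\lambda_n=(a\lambda+b)/a$ and $(ad-bc)/(c\lambda_n+d)=a$. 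This identity has no $\omega$-terms at all. Differentiating at $\lambda=\lambda_k$, where $a\lambda_k+b\neq 0$, gives $(\tilde y_{k+1},y_n)=-(ad-bc)\mathfrak{A}(\tilde y_{k+1})\mathfrak{A}(y_n)$ with the primed form of (\ref{(3.7)}). Linearity then finishes.

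\textbf{A smaller remark on your case $\lambda_n=-d/c$.} The form (\ref{(2.16)}) is just (\ref{(2.10)}) rewritten, so ``reverting'' returns (\ref{(2.10)}) exactly. The two $\omega$-terms cancel identically rather than each vanishing to second order. That termwise claim is actually false if $a\lambda_k+b=0$. So simply differentiate (\ref{(2.10)}) in $\mu$, as the paper does.
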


\begin{proof}
If $\lambda_k \neq -\dfrac{d}{c}\neq\lambda_n$, then we substitute $\mu=\lambda_n$ in (\ref{(2.8)}) and obtain 
\begin{equation}
\left(y(\cdot,\lambda), y_n(\cdot)\right)
= -(ad - bc)\frac{y(1,\lambda)}{c\lambda + d}\mathfrak{A}(y_{n})+ \frac{\omega(\lambda)}{\lambda - \lambda_n} \cdot \frac{y_n(1)}{c\lambda + d}.\label{(3.9)}
\end{equation}
By differentiating with respect to $\lambda$ and substituting $\lambda=\lambda_k$ we obtain 
\begin{equation}
\left( \tilde{y}_{k + 1},y_{n} \right) = - (ad - bc)\mathfrak{A}(\tilde{y}_{k+1})\mathfrak{A}(y_{n}). \label{(3.10)}
\end{equation}
By Lemma 2.1,
\begin{equation}
\left({y}_{k},y_{n} \right) = - (ad - bc)\mathfrak{A}({y}_{k})\mathfrak{A}(y_{n}). \label{(3.11)}
\end{equation}
By adding (\ref{(3.10)}) to  (\ref{(3.11)}) multiplied by $C$ we obtain (\ref{(3.8)}) in this case.

If $\lambda_k \neq -\dfrac{d}{c}=\lambda_n$, then we differentiate (\ref{(2.10)}) with respect to $\mu$ and obtain
\begin{equation}
\left(y_\mu(\cdot), y_n(\cdot)\right)
= -(ad - bc)\left(\frac{{y_\mu(1,\mu)}}{c\mu + d}-\frac{{cy(1,\mu)}}{(c\mu + d)^2}\right)\mathfrak{A}({y}_{n}). \label{(3.12)}
\end{equation}
If we substitute $\mu=\lambda_k$ in (\ref{(3.12)}), then we obtain (\ref{(3.10)}) and then (\ref{(3.8)}) in this case.

Finally, if $\lambda_k = -\dfrac{d}{c}\neq\lambda_n$, then we need to consider two cases: $\lambda_n\neq-\frac{b}{a}$ and $\lambda_n=-\frac{b}{a}$.
First note that by (\ref{(2.3)}) 
\begin{equation*}
    \frac{y{'}(1,\mu)}{a\mu + b}=\frac{y(1,\mu)}{c\mu + d}-\frac{\omega(\mu)}{(a\mu + b)(c\mu + d)},
\end{equation*}
which we substitute in (\ref{(2.9)}) to obtain 
\[
\left(y(\cdot,\lambda), y(\cdot,\mu)\right)
= -(ad - bc)\frac{y^\prime(1,\lambda)}{a\lambda + b}\left(\frac{y(1,\mu)}{c\mu + d}-\frac{\omega(\mu)}{(a\mu + b)(c\mu + d)} \right)
\]
\begin{equation}
- \frac{\omega({\mu})}{\lambda - {\mu}} \cdot \frac{y^\prime(1,\lambda)}{a{\mu} + b}
+ \frac{\omega(\lambda)}{\lambda - {\mu}} \cdot \frac{{y^\prime(1,\mu)}}{a\lambda + b}.\label{(3.13)}
\end{equation}
If  $\lambda_n\neq-\frac{b}{a}$, then we put in the last equality $\mu=\lambda_n$ and obtain 
\[
\left(y(\cdot,\lambda), y_n(\cdot)\right)
= -(ad - bc)\frac{y^\prime(1,\lambda)}{a\lambda + b}\frac{y_n(1)}{c\lambda_n + d}+\frac{\omega(\lambda)y^\prime_n(1)}{(\lambda-\lambda_n)(a\lambda + b)}.
\]
By taking derivative of this equality with respect to $\lambda$ we obtain 
\[
\left(y_\lambda(\cdot,\lambda), y_n(\cdot)\right)
= -(ad - bc)\left(\frac{y_\lambda^\prime(1,\lambda)}{a\lambda + b}-\frac{ay^\prime(1,\lambda)}{(a\lambda + b)^2}\right)\frac{y_n(1)}{c\lambda_n + d}
\]
\begin{equation*}
    +\left(\frac{\omega^\prime(\lambda)}{(\lambda-\lambda_n)(a\lambda + b)}-\frac{\omega(\lambda)}{(\lambda-\lambda_n)^2(a\lambda + b)}-\frac{a\omega(\lambda)}{(\lambda-\lambda_n)(a\lambda + b)^2}\right)y_n^\prime(1).
\end{equation*}
By substituting $\lambda=\lambda_k$ and noting that $\omega(\lambda_k)=\omega^\prime(\lambda_k)=0 $  we obtain again (\ref{(3.10)}) and then using (\ref{(3.11)}) we prove (\ref{(3.8)}).
If  $\lambda_n=-\frac{b}{a}$, then tending $\mu\to\lambda_n$ in (\ref{(3.13)}) and noting $y^\prime(1,\mu)\to y_n^\prime(1)=0$ gives 
\begin{equation}
   \left(y(\cdot,\lambda), y_n(\cdot)\right)
= -(ad - bc)\frac{y^\prime(1,\lambda)}{a\lambda + b}\frac{y_n(1)}{c\lambda_n + d} \label{(3.14)}
\end{equation}
Taking the derivative with respect to $\lambda$ and then substituting $\lambda=\lambda_k$ gives (\ref{(3.10)}) and finally (\ref{(3.8)}) in this case.
\end{proof}
\begin{lem} Suppose that  $\lambda _{k}$ is a multiple eigenvalue \((\lambda_{k}=\lambda_{k+1} \le\lambda_{k+2})\).
Then 
\begin{equation}
\left( y_{k + 1},y_{k} \right) = - (ad - bc)\mathfrak{A}(y_{k+1})\mathfrak{A}(y_{k})+\mathfrak{A}(y_k)\frac{\omega^{\prime\prime}(\lambda_k)}{2}. \label{(3.16)}
\end{equation}
\end{lem}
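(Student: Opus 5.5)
We follow the proof of Lemma 2.2 and of the previous lemma: start from the identity (\ref{(2.15)}) in the case $\lambda_k\neq-\frac dc$, differentiate once in $\lambda$, and let $\lambda\to\lambda_k$. Since $\lambda_k$ is real and multiple, $\omega(\lambda_k)=\omega'(\lambda_k)=0$, and Corollary 2.4 already gives $\|y_k\|^2=-(ad-bc)\mathfrak A^2(y_k)$. It therefore suffices to prove the formula for the fixed associated function $\tilde y_{k+1}$, i.e.
\[
(\tilde y_{k+1},y_k)=-(ad-bc)\mathfrak A(\tilde y_{k+1})\mathfrak A(y_k)+\mathfrak A(y_k)\frac{\omega''(\lambda_k)}{2}.
\]
The general case $y_{k+1}=\tilde y_{k+1}+Cy_k$ then follows by adding $C$ times (\ref{(2.17)}). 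This works because $\mathfrak A(y_{k+1})=\mathfrak A(\tilde y_{k+1})+C\mathfrak A(y_k)$, which is immediate from (\ref{(3.7)}) and (\ref{(2.11)}).

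\textbf{Case $\lambda_k\neq-\frac dc$.} Take (\ref{(2.15)}) with $n=k$ and write $g(\lambda)=\dfrac{\omega(\lambda)}{\lambda-\lambda_k}$. Differentiating in $\lambda$ gives
\[
(y_\lambda(\cdot,\lambda),y_k)=-(ad-bc)\left(\frac{y_\lambda(1,\lambda)}{c\lambda+d}-\frac{cy(1,\lambda)}{(c\lambda+d)^2}\right)\mathfrak A(y_k)+g'(\lambda)\frac{y_k(1)}{c\lambda+d}-g(\lambda)\frac{cy_k(1)}{(c\lambda+d)^2}.
\]
Because $\omega$ has a zero of order at least two at $\lambda_k$, Taylor expansion gives $g(\lambda)\to\omega'(\lambda_k)=0$ and $g'(\lambda)\to\omega''(\lambda_k)/2$. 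Letting $\lambda\to\lambda_k$ and using the uniform convergence $y_\lambda\to\tilde y_{k+1}$, $y\to y_k$, we get the claimed identity with $\mathfrak A(y_k)=y_k(1)/(c\lambda_k+d)$.

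\textbf{Case $\lambda_k=-\frac dc$.} Here we use (\ref{(2.16)}) with $n=k$, with the roles of the arguments swapped. Since all quantities are real, conjugation can be dropped and $(y_k,y(\cdot,\mu))=(y(\cdot,\mu),y_k)$. This gives
\[
(y(\cdot,\mu),y_k)=-(ad-bc)\mathfrak A(y_k)\frac{y'(1,\mu)}{a\mu+b}+\frac{y_k'(1)}{a\mu+b}\,\frac{\omega(\mu)}{\mu-\lambda_k}.
\]
Differentiating in $\mu$ and letting $\mu\to\lambda_k$ works exactly as before. The first term produces $\mathfrak A(\tilde y_{k+1})$ in its second form in (\ref{(3.7)}). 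The second term gives $\frac{y_k'(1)}{a\lambda_k+b}\cdot\frac{\omega''(\lambda_k)}{2}=\mathfrak A(y_k)\frac{\omega''(\lambda_k)}{2}$, while the extra term proportional to $\omega(\mu)/(\mu-\lambda_k)$ vanishes in the limit.

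The only delicate point is justifying the limits of $g$ and $g'$. This is where the triple-zero structure is not needed but the double zero is. Write $\omega(\lambda)=\tfrac12\omega''(\lambda_k)(\lambda-\lambda_k)^2+O((\lambda-\lambda_k)^3)$, using that $\omega$ is entire in $\lambda$. Then $g(\lambda)=\tfrac12\omega''(\lambda_k)(\lambda-\lambda_k)+O((\lambda-\lambda_k)^2)$ is analytic near $\lambda_k$ with $g(\lambda_k)=0$ and $g'(\lambda_k)=\omega''(\lambda_k)/2$. We also need $a\lambda_k+b\neq0$ in the second case, since $y_k(1)$ and $y_k'(1)$ cannot both vanish.
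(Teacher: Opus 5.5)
Your proof is correct and is essentially the paper's. For $\lambda_k\neq-\frac{d}{c}$ you differentiate \eqref{(3.17)} (your \eqref{(2.15)} with $n=k$) once, pass to the limit using the double zero of $\omega$, and add $C$ times \eqref{(3.20)}; for $\lambda_k=-\frac{d}{c}$ your swapped form of \eqref{(2.16)} coincides with the paper's \eqref{(3.21)}, which the paper obtains from \eqref{(2.9)}. The only slip is your justification of $a\lambda_k+b\neq0$ in that case: it follows from $a\left(-\frac{d}{c}\right)+b=\frac{bc-ad}{c}\neq0$, not from $y_k(1)$ and $y_k'(1)$ not both vanishing.
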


\begin{proof}
If $\lambda_k\neq-\frac{d}{c}$, then we substitute $\mu=\lambda_k$ in (\ref{(2.8)}) and obtain 
\begin{equation}
\left(y(\cdot,\lambda), y_k(\cdot)\right)
= -(ad - bc)\frac{y(1,\lambda){y_k(1)}}{(c\lambda + d)(c{\lambda_k} + d)}+ \frac{\omega(\lambda)}{\lambda - {\lambda_k}} \cdot \frac{{y_k(1)}}{c\lambda + d}.\label{(3.17)}
\end{equation}
By taking the derivative with respect to $\lambda$ we obtain
$$
\left(y_\lambda(\cdot,\lambda), y_k(\cdot)\right)
= -(ad - bc)\left(\frac{y_\lambda{(1,\lambda)}}{c\lambda + d}-\frac{cy{(1,\lambda)}}{(c\lambda + d)^2}\right)\frac{{y_k(1)}}{c{\lambda_k} + d}
$$
\begin{equation}
    +\left(\frac{\omega^\prime(\lambda)}{(\lambda-\lambda_k)(c\lambda + d)}-\frac{\omega(\lambda)}{(\lambda-\lambda_k)^2(c\lambda + d)}-\frac{c\omega(\lambda)}{(\lambda-\lambda_k)(c\lambda + d)^2}\right)y_k(1).\label{(3.18)}
\end{equation}
By tending $\lambda\to\lambda_k$ and noting $\omega^\prime(\lambda_k)=0$ we obtain 
\begin{equation}
\left( \tilde{y}_{k + 1},y_{k} \right) = - (ad - bc)\mathfrak{A}(\tilde{y}_{k+1})\mathfrak{A}(y_{k})+\mathfrak{A}(y_k)\frac{\omega^{\prime\prime}(\lambda_k)}{2}. \label{(3.19)}
\end{equation}
By (\ref{(2.14)}) 
\begin{equation}
\left( {y}_{k},y_{k} \right) = - (ad - bc)(\mathfrak{A}(y_{k}))^2. \label{(3.20)}
\end{equation}
By multiplying (\ref{(3.20)}) to $C$ and adding to (\ref{(3.19)}) we obtain (\ref{(3.16)}) in this case.
If $\lambda_k=-\frac{d}{c}$, then we substitute $\mu=\lambda_k$ in (\ref{(2.9)}) and obtain 
\begin{equation}
\left(y(\cdot,\lambda), y_k(\cdot)\right)
= -(ad - bc)\frac{y^\prime(1,\lambda){y^\prime_k(1)}}{(a\lambda + b)(a{\lambda_k} + b)}+ \frac{\omega(\lambda)}{\lambda - {\lambda_k}} \cdot \frac{{y^\prime_k(1)}}{a\lambda + b}.\label{(3.21)}
\end{equation}
By taking the derivative with respect to $\lambda$ we obtain
$$
\left(y_\lambda(\cdot,\lambda), y_k(\cdot)\right)
= -(ad - bc)\left(\frac{y^\prime_\lambda{(1,\lambda)}}{a\lambda + b}-\frac{ay^\prime{(1,\lambda)}}{(a\lambda + b)^2}\right)\frac{{y^\prime_k(1)}}{a{\lambda_k} + b}
$$
\begin{equation}
    +\left(\frac{\omega^\prime(\lambda)}{(\lambda-\lambda_k)(a\lambda + b)}-\frac{\omega(\lambda)}{(\lambda-\lambda_k)^2(a\lambda + b)}-\frac{a\omega(\lambda)}{(\lambda-\lambda_k)(a\lambda + b)^2}\right)y^\prime_k(1).\label{(3.22)}
\end{equation}
By tending $\lambda\to\lambda_k$ and noting $\omega^\prime(\lambda_k)=0$ we obtain (\ref{(3.19)}) in this case. Again using (\ref{(3.20)}) we prove (\ref{(3.16)}) in this case.
\end{proof}

Let us define 
$\mathfrak{A}(\hat{y}_{k+1})$ as in (\ref{(3.7)}).

\begin{lem} Suppose that  $\lambda _{k}$ is a multiple eigenvalue \((\lambda_{k}=\lambda_{k+1} \le\lambda_{k+2})\).
Then 
\begin{equation}
{\left\| {y_{k+1}} \right\|}^{2}_{2} = - (ad - bc)(\mathfrak{A}(y_{k+1}))^2+\mathfrak{A}(\hat{y}_{k+1})\frac{\omega^{\prime\prime}(\lambda_k)}{2}+\mathfrak{A}(y_k)\frac{\omega^{\prime\prime\prime}(\lambda_k)}{6}, \label{(3.24)}
\end{equation}
where $\hat{y}_{k+1}={y}_{k+1}+Cy_k$.
\end{lem}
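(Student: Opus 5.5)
We use the same device as in Lemmas 3.1 and 3.2, but now differentiate in both variables. Take the identity (2.8) (when $\lambda_k\neq-\frac dc$) with real $\lambda,\mu$ near $\lambda_k$. Write it as
\[
F(\lambda,\mu)=\left(y(\cdot,\lambda),y(\cdot,\mu)\right)=-(ad-bc)g(\lambda)g(\mu)+\frac{\omega(\lambda)g(\mu)-\omega(\mu)g(\lambda)}{\lambda-\mu},
\]
where $g(\lambda)=\frac{y(1,\lambda)}{c\lambda+d}$. The cross term is obtained after regrouping and using $\omega(\lambda)=(c\lambda+d)\bigl[(a\lambda+b)g(\lambda)-y'(1,\lambda)\bigr]$. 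More carefully, we keep the exact form of (2.8) with its two $\omega$-terms.

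Next apply $\partial_\lambda\partial_\mu$ and let $\lambda,\mu\to\lambda_k$. On the left this gives $(\tilde y_{k+1},\tilde y_{k+1})$, since $y_\lambda\to\tilde y_{k+1}$ uniformly. On the right, the first term gives $-(ad-bc)g'(\lambda_k)^2=-(ad-bc)\mathfrak{A}(\tilde y_{k+1})^2$, because $g'(\lambda_k)$ is exactly (3.7) for $\tilde y_{k+1}$. The quotient term $H(\lambda,\mu)=\frac{\omega(\lambda)h(\mu)-\omega(\mu)h(\lambda)}{\lambda-\mu}$ is symmetric up to how the factors are arranged and is analytic across the diagonal. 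Expand $\omega(\lambda)=\frac{\omega''(\lambda_k)}{2}s^2+\frac{\omega'''(\lambda_k)}{6}s^3+\dots$ with $s=\lambda-\lambda_k$ and $t=\mu-\lambda_k$, using $\omega(\lambda_k)=\omega'(\lambda_k)=0$. Also expand $g(\mu)=g_0+g_1t+\dots$ with $g_0=\mathfrak{A}(y_k)$ and $g_1=\mathfrak{A}(\tilde y_{k+1})$. The divided differences $(s^2-t^2)/(s-t)=s+t$ and $(s^3-t^3)/(s-t)=s^2+st+t^2$ then let us read off the coefficient of $st$. That coefficient is $\frac{\omega''}{2}\cdot g_1\cdot(\text{combinatorial factor})+\frac{\omega'''}{6}g_0$. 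We will check that the combination gives exactly $\mathfrak{A}(\tilde y_{k+1})\frac{\omega''(\lambda_k)}{2}+\mathfrak{A}(y_k)\frac{\omega'''(\lambda_k)}{6}$. This yields (3.24) for $\tilde y_{k+1}$ with $C=0$.

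For the general associated function $y_{k+1}=\tilde y_{k+1}+Cy_k$ we expand the norm bilinearly:
\[
\|y_{k+1}\|^2=\|\tilde y_{k+1}\|^2+2C(\tilde y_{k+1},y_k)+C^2\|y_k\|^2.
\]
We substitute (3.19) for the middle term and (3.20) for the last. Because $\mathfrak{A}$ is linear, $\mathfrak{A}(y_{k+1})=\mathfrak{A}(\tilde y_{k+1})+C\mathfrak{A}(y_k)$, so the $-(ad-bc)$ parts assemble into $-(ad-bc)\mathfrak{A}(y_{k+1})^2$. The $\omega''$ parts combine as $\bigl(\mathfrak{A}(\tilde y_{k+1})+2C\mathfrak{A}(y_k)\bigr)\frac{\omega''}{2}$. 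Relative to the given $y_{k+1}$, this is $\mathfrak{A}$ of $y_{k+1}$ shifted once more by $Cy_k$, which explains the role of $\hat y_{k+1}$ in the statement. The precise bookkeeping of which constant $C$ enters $\hat y_{k+1}$ must be matched to the statement's convention. The case $\lambda_k=-\frac dc$ is handled identically, starting from (2.9) with $g(\lambda)=\frac{y'(1,\lambda)}{a\lambda+b}$, as was done for (3.21)–(3.22).

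The main obstacle is the two-variable limit of the singular quotient term. We must justify that it extends analytically to the diagonal and extract the correct mixed Taylor coefficient. The $s^2$ and $t^2$ pieces must contribute nothing to $\partial_\lambda\partial_\mu$, and the factors $1/(c\lambda+d)$ must be absorbed correctly into $g$. We would first rewrite (2.8) fully in terms of $g$ and $\omega$ to make the $\lambda\leftrightarrow\mu$ structure transparent before expanding.
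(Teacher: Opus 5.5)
Your proposal is correct and is essentially the paper's proof. The paper also obtains (\ref{(3.26)}) for $\tilde{y}_{k+1}$ by differentiating (\ref{(2.8)}) once in $\lambda$ and once in $\mu$ at $\lambda_k$, via (\ref{(3.25)}). It then passes to $y_{k+1}=\tilde{y}_{k+1}+Cy_k$ by adding $C$-multiples of the (\ref{(3.19)})-type identity twice, which is the same bookkeeping as your bilinear expansion and gives exactly $\hat{y}_{k+1}=\tilde{y}_{k+1}+2Cy_k=y_{k+1}+Cy_k$.

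To close the step you left open: the combinatorial factor is $1$. The factors $(c\mu+d)/(c\lambda+d)$ dropped in your first display only add $-c\bigl[\omega(\lambda)g(\mu)/(c\lambda+d)+\omega(\mu)g(\lambda)/(c\mu+d)\bigr]$, whose mixed derivative at $(\lambda_k,\lambda_k)$ vanishes because $\omega(\lambda_k)=\omega^{\prime}(\lambda_k)=0$.
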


\begin{proof}
If $\lambda_k\neq-\frac{d}{c}$, then we differentiate (\ref{(2.8)}) with respect to $\lambda$ and substitute $\lambda=\lambda_k$ to obtain 
$$
\left(\tilde{y}_{k+1}(\cdot), y(\cdot,\mu)\right)
= -(ad - bc)\mathfrak{A}(\tilde{y}_{k+1})\frac{{y(1,\mu)}}{c{\mu} + d}
$$
\begin{equation}
- \frac{\omega{(\mu})}{{c\mu+d}} \cdot \frac{y_k(1)}{\lambda_k-{\mu}}\left(\frac{{\tilde{y}_{k+1}(1)}}{y_k(1)}-\frac{1}{\lambda_k-{\mu}} \right)
\label{(3.25)}
\end{equation}
By differentiating this with respect to $\mu$ and tending $\mu\to\lambda_k$ we obtain
\begin{equation}
\left(\tilde{y}_{k+1}(\cdot),\tilde{y}_{k+1}(\cdot)\right)
= -(ad - bc)(\mathfrak{A}(\tilde{y}_{k+1}))^2+\mathfrak{A}(\tilde{y}_{k+1})\frac{\omega^{\prime\prime}(\lambda_k)}{2}+\mathfrak{A}(y_k)\frac{\omega^{\prime\prime\prime}(\lambda_k)}{6}.\label{(3.26)}
\end{equation}
By adding to the last equality the equality (\ref{(3.19)}) multiplied by $C$ we obtain 
$$
\left(\tilde{y}_{k+1}(\cdot),{y}_{k+1}(\cdot)\right)
= -(ad - bc)\mathfrak{A}(\tilde{y}_{k+1})\mathfrak{A}({y}_{k+1})
$$
\begin{equation}
+\mathfrak{A}({y}_{k+1})\frac{\omega^{\prime\prime}(\lambda_k)}{2}+\mathfrak{A}(y_k)\frac{\omega^{\prime\prime\prime}(\lambda_k)}{6}.\label{(3.27)}
\end{equation}
Finally, by adding the equality (\ref{(3.19)}) multiplied by $C$  to the last equality we obtain (\ref{(3.24)}) in this case. If $\lambda_k=-\frac{d}{c}$, then we do same process with (\ref{(2.9)}).
\end{proof}

\section{The second associated function \(y_{k + 2}\).}
If \(\lambda_{k}\) is a triple  eigenvalue
(\(\lambda_{k} = \lambda_{k + 1}=\lambda_{k + 2}\)), then the associated function
\(y_{k + 2}\) of the associated function \(y_{k+1}\) is defined by
\begin{equation}
 -y_{k+2}^{\prime \prime }+q(x)y_{k+2}=\lambda _{k}y_{k+2}+y_{k+1}, \label{(4.1)}  
\end{equation}
\begin{equation}
   y_{k+2}(0)\cos \beta  =y_{k+2}^{\prime }(0)\sin \beta,     \label{(4.2)}
\end{equation}
\begin{equation}
 \left( a\lambda_{k} + b \right)y_{k + 2}(1) + ay_{k + 1}(1) = \left( c\lambda_{k} + d \right)y'_{k + 2}(1) + cy'_{k + 1}(1).  \label{(4.3)}   
\end{equation}
By differentiating (\ref{(3.4)}), (\ref{(3.5)}), and (\ref{(3.6)}) with respect to $\lambda$ we obtain 
\begin{equation}
  -y_{\lambda\lambda}^{\prime \prime }+q(x)y_{\lambda\lambda}=\lambda y_{\lambda\lambda}+2y_{\lambda},\ 0<x<1,  \label{(4.4)} 
\end{equation}
\begin{equation}
y_{\lambda\lambda}(0)\cos \beta =y_{\lambda\lambda}^{\prime }(0)\sin \beta ,\ 0\leq \beta <\pi ,
\label{(4.5)}
\end{equation}
\begin{equation}
 \omega^{\prime\prime}(\lambda) = (a\lambda + b)y_{\lambda\lambda}(1,\lambda)+2ay_\lambda(1,\lambda) - (c\lambda + d)y_{\lambda\lambda}{'}(1,\lambda)-2cy_\lambda^\prime(1,\lambda),\label{(4.6)}   
\end{equation}
respectively. By (\ref{(4.1)})-(\ref{(4.6)}), \(\omega(\lambda_k)=\omega^\prime(\lambda_k)=\omega^{\prime\prime}(\lambda_k)=0\), and 
$$\lim_{\lambda\to\lambda_k}y_{\lambda\lambda}(x,\lambda)=2\tilde{y}_{k+2},$$uniformly over $x\in[0,1]$, where $\tilde{y}_{k+2}$ is a fixed associated function. Note that $y_{k + 2}$ is not unique and $y_{k + 2}=\tilde{y}_{k+2}+C\tilde{y}_{k+1}+Dy_k$ for some constant $D$. In addition, if $\check{C}y_k$ is added to the associated function $y_{k+1}$ for some constant $\check{C}$, then $\check{C}y_{k+1}$ is added to the associated function $y_{k+2}$. We also define 
\begin{equation}
\mathfrak{A}(y_{k+2}) =
\begin{cases}
\displaystyle  
\frac{y_{k + 2}(1)}{c\lambda_{k} + d} - \frac{cy_{k+1}(1)}{\left( c\lambda_{k} + d \right)^{2}}
+\frac{c^2y_{k}(1)}{\left( c\lambda_{k} + d \right)^{3}}, 
&  \text{if }  \lambda_k \neq -\dfrac{d}{c}, \\[10pt]
\displaystyle \ 
\frac{y'_{k + 2}(1)}{a\lambda_{k} + b} - \frac{ay'_{k+1}(1)}{\left( a\lambda_{k} + b \right)^{2}}
+\frac{a^2y'_{k}(1)}{\left( a\lambda_{k} + b\right)^{3}}, 
& \text{if } \lambda_k = -\dfrac{d}{c},
\end{cases}\label{(4.7)}
\end{equation}
\begin{equation}
\mathfrak{A}(\tilde{y}_{k+2}) =
\begin{cases}
\displaystyle  
\frac{\tilde{y}_{k + 2}(1)}{c\lambda_{k} + d} - \frac{c\tilde{y}_{k+1}(1)}{\left( c\lambda_{k} + d \right)^{2}}
+\frac{c^2y_{k}(1)}{\left( c\lambda_{k} + d \right)^{3}}, 
&  \text{if }  \lambda_k \neq -\dfrac{d}{c}, \\[10pt]
\displaystyle \ 
\frac{\tilde{y}'_{k + 2}(1)}{a\lambda_{k} + b} - \frac{a\tilde{y}'_{k+1}(1)}{\left( a\lambda_{k} + b \right)^{2}}
+\frac{a^2y'_{k}(1)}{\left( a\lambda_{k} + b\right)^{3}}, 
& \text{if } \lambda_k = -\dfrac{d}{c}.
\end{cases}\label{(4.8)}
\end{equation}
\begin{lem}
Suppose that \(\lambda_{k}\) is a triple eigenvalue and \(\lambda_{n}\) is a simple  eigenvalue \((\lambda_{k}=\lambda_{k+1}=\lambda_{k+2}\ne\lambda_{n})\).
Then 
\begin{equation}
  \left( y_{k + 2},y_{n} \right) = - (ad - bc)\mathfrak{A}(y_{k+2})\mathfrak{A}(y_n)
.\label{(4.9)}  
\end{equation}
\end{lem}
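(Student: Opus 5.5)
We follow the pattern of the proof of Lemma 3.1, now differentiating twice in $\lambda$. First assume $\lambda_k\neq-\frac{d}{c}\neq\lambda_n$. We take identity (3.9), which holds for all $\lambda$ near $\lambda_k$:
\[
\left(y(\cdot,\lambda), y_n\right)= -(ad - bc)\frac{y(1,\lambda)}{c\lambda + d}\mathfrak{A}(y_{n})+ \frac{\omega(\lambda)}{\lambda - \lambda_n} \cdot \frac{y_n(1)}{c\lambda + d}.
\]
We differentiate it twice with respect to $\lambda$ and let $\lambda\to\lambda_k$, using $y_{\lambda\lambda}\to 2\tilde y_{k+2}$ and $y_\lambda\to\tilde y_{k+1}$.

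\emph{The $\omega$-term.} Since $\lambda_k$ is a triple zero, $\omega(\lambda_k)=\omega'(\lambda_k)=\omega''(\lambda_k)=0$. The factor $1/((\lambda-\lambda_n)(c\lambda+d))$ is smooth near $\lambda_k$, because $\lambda_n\ne\lambda_k$ and $c\lambda_k+d\neq0$. Hence, by Leibniz's rule, every term of the second derivative of this product contains $\omega$, $\omega'$ or $\omega''$ at $\lambda_k$, and the whole term vanishes.

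\emph{The first term.} The second derivative of $y(1,\lambda)/(c\lambda+d)$ at $\lambda_k$ equals
\[
\frac{y_{\lambda\lambda}}{c\lambda+d}-\frac{2c\,y_\lambda}{(c\lambda+d)^2}+\frac{2c^2 y}{(c\lambda+d)^3},
\]
evaluated at $x=1$. In the limit this is exactly $2\mathfrak{A}(\tilde y_{k+2})$ by (4.8). Dividing by $2$ gives
\[
(\tilde y_{k+2},y_n)=-(ad-bc)\mathfrak{A}(\tilde y_{k+2})\mathfrak{A}(y_n).
\]

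\emph{Passing to general $y_{k+2}$.} We write $y_{k+2}=\tilde y_{k+2}+C\tilde y_{k+1}+Dy_k$, with $y_{k+1}=\tilde y_{k+1}+Cy_k$. We combine the identity just obtained with (3.10) multiplied by $C$ and (3.11) multiplied by $D$. The functional $\mathfrak{A}$ is linear in the sense needed. Substituting $\tilde y_{k+2}=y_{k+2}-C\tilde y_{k+1}-Dy_k$ and $\tilde y_{k+1}=y_{k+1}-Cy_k$ into (4.8) reproduces (4.7) for $y_{k+2}$, provided the combination of the $y_k(1)$ coefficients works out. The bookkeeping is essentially that $\mathfrak{A}$ applied to a Jordan chain behaves like $\frac{1}{j!}\frac{d^j}{d\lambda^j}\frac{y(1,\lambda)}{c\lambda+d}$, which is compatible with the chain shifts. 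If the constants do not match exactly, we instead verify the final formula directly: both sides of (4.9) are affine in $C$ and $D$ in a compatible way, since each ingredient satisfies the same bilinear formula. We expect this normalization bookkeeping, rather than the differentiation, to be the main obstacle.

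\emph{Remaining cases.} If $\lambda_n=-\frac{d}{c}\ne\lambda_k$, we differentiate (2.10) twice with respect to $\mu$ instead, as was done in (3.12); no $\omega$-term appears there at all. If $\lambda_k=-\frac{d}{c}$, we use representation (3.13) with $\mu=\lambda_n$ in the two subcases $\lambda_n\ne-\frac{b}{a}$ and $\lambda_n=-\frac{b}{a}$ (the latter via (3.14)), and differentiate twice in $\lambda$. The $\omega$-terms again vanish by the triple zero, and the first term yields the second line of (4.7)/(4.8).
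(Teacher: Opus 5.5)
Your proposal is correct and follows the paper's proof essentially step for step. You differentiate (3.9) twice at $\lambda_k$, the $\omega$-term vanishes by the triple zero, and you then add $C$ times (3.10) and $D$ times (3.11), with the same case split via (2.10)/(3.12), (3.13) and (3.14). The bookkeeping you were worried about closes exactly: substituting $y_{k+2}=\tilde y_{k+2}+C\tilde y_{k+1}+Dy_k$ and $y_{k+1}=\tilde y_{k+1}+Cy_k$ into (4.7) gives $\mathfrak{A}(y_{k+2})=\mathfrak{A}(\tilde y_{k+2})+C\,\mathfrak{A}(\tilde y_{k+1})+D\,\mathfrak{A}(y_k)$, so the fallback argument is not needed.
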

\begin{proof}
If $\lambda_k \neq -\dfrac{d}{c}\neq\lambda_n$, then we differentiate (\ref{(3.9)}) with respect to $\lambda$ twice and substitute  $\lambda=\lambda_k$ to obtain
\begin{equation*}
\left( 2\tilde{y}_{k + 2},y_{n} \right) = - (ad - bc)\left(\frac{2\tilde{y}_{k + 2}(1)}{c\lambda_{k} + d} - \frac{2c\tilde{y}_{k+1}(1)}{\left( c\lambda_{k} + d \right)^{2}}
+\frac{2c^2y_{k}(1)}{\left( c\lambda_{k} + d \right)^{3}}\right)\mathfrak{A}(y_{n}). 
\end{equation*}
We divide this by two and add (\ref{(3.10)}) multiplied by $C$ and (\ref{(3.11)}) multiplied by $D$.
If $\lambda_k \neq -\dfrac{d}{c}=\lambda_n$, then we differentiate (\ref{(3.12)}) with respect to $\mu$ and substitute $\mu=\lambda_k$ to obtain (\ref{(4.9)}) in this case.
Finally, if $\lambda_k = -\dfrac{d}{c}\neq\lambda_n$, then we need to consider two cases: $\lambda_n\neq-\frac{b}{a}$ and $\lambda_n=-\frac{b}{a}$. If  $\lambda_n\neq-\frac{b}{a}$, then 
we put in (\ref{(3.13)}) $\mu=\lambda_n$, differentiate twice with respect to $\lambda$ and substitute $\lambda=\lambda_k$ (noting $\omega(\lambda_k)=\omega^\prime(\lambda_k)=\omega^{\prime\prime}(\lambda_k)=0$) we obtain (\ref{(4.9)}) in this case.
If  $\lambda_n\neq-\frac{b}{a}$, then we differentiate (\ref{(3.14)}) with respect to $\lambda$ twice and then substitute $\lambda=\lambda_k$ to prove (\ref{(4.9)}) in this case.
\end{proof}
\begin{lem} Suppose that  $\lambda _{k}$ is an eigenvalue of multiplicity three \((\lambda_{k}=\lambda_{k+1} =\lambda_{k+2})\). Then
\begin{equation}
 (y_{k+2},y_{k})=- (ad - bc)\mathfrak{A}(y_{k+2})\cdot\mathfrak{A}(y_{k})+\mathfrak{A}(y_k)\frac{\omega^{\prime\prime\prime}(\lambda_k)}{6}. \label{(4.10)}
\end{equation}
\end{lem}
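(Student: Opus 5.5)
We follow the same pattern as in the proofs of Lemma 3.2 and Lemma 4.1. First suppose $\lambda_k\neq-\frac{d}{c}$. We start from (\ref{(3.17)}), which expresses $\left(y(\cdot,\lambda),y_k\right)$ through $y(1,\lambda)/(c\lambda+d)$ and the quotient $\omega(\lambda)/\left((\lambda-\lambda_k)(c\lambda+d)\right)$. We differentiate this identity twice with respect to $\lambda$ and let $\lambda\to\lambda_k$. On the left, $y_{\lambda\lambda}\to 2\tilde y_{k+2}$ uniformly, so the limit is $2(\tilde y_{k+2},y_k)$.

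For the first term on the right, the Leibniz rule gives
\[
\frac{d^2}{d\lambda^2}\frac{y(1,\lambda)}{c\lambda+d}\Big|_{\lambda_k}=\frac{2\tilde y_{k+2}(1)}{c\lambda_k+d}-\frac{2c\tilde y_{k+1}(1)}{(c\lambda_k+d)^2}+\frac{2c^2y_k(1)}{(c\lambda_k+d)^3}=2\mathfrak{A}(\tilde y_{k+2}),
\]
which yields the term $-2(ad-bc)\mathfrak{A}(\tilde y_{k+2})\mathfrak{A}(y_k)$.

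For the second term we use the triple zero, $\omega(\lambda_k)=\omega'(\lambda_k)=\omega''(\lambda_k)=0$. Writing $\omega(\lambda)=\frac{\omega'''(\lambda_k)}{6}(\lambda-\lambda_k)^3+O((\lambda-\lambda_k)^4)$, the function $\omega(\lambda)/(\lambda-\lambda_k)$ equals $\frac{\omega'''(\lambda_k)}{6}(\lambda-\lambda_k)^2+O((\lambda-\lambda_k)^3)$. Hence its value and first derivative vanish at $\lambda_k$, and its second derivative there is $\frac{\omega'''(\lambda_k)}{3}$. In the Leibniz expansion of the second derivative of the product with $y_k(1)/(c\lambda+d)$, only this term survives, contributing $\frac{\omega'''(\lambda_k)}{3}\cdot\frac{y_k(1)}{c\lambda_k+d}=2\mathfrak{A}(y_k)\frac{\omega'''(\lambda_k)}{6}$. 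Dividing by two gives (\ref{(4.10)}) for $\tilde y_{k+2}$. This expansion is cleaner than differentiating the quotient term by term as in (\ref{(3.18)}).

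To pass to a general $y_{k+2}=\tilde y_{k+2}+C\tilde y_{k+1}+Dy_k$, we add (\ref{(3.19)}) multiplied by $C$ and (\ref{(3.20)}) multiplied by $D$. Since $\omega''(\lambda_k)=0$ for a triple eigenvalue, (\ref{(3.19)}) reduces to $(\tilde y_{k+1},y_k)=-(ad-bc)\mathfrak{A}(\tilde y_{k+1})\mathfrak{A}(y_k)$. Because $\mathfrak{A}$ is linear in the function, with the lower-order terms in (\ref{(4.7)}) shifting consistently, the combination gives $-(ad-bc)\mathfrak{A}(y_{k+2})\mathfrak{A}(y_k)$ plus the unchanged $\omega'''$ term.

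The main obstacle is the bookkeeping of the normalization. If $y_{k+1}$ itself was modified by $\check C y_k$, then $y_{k+2}$ acquires $\check C y_{k+1}$, and the subtracted terms in (\ref{(4.7)}) involve $y_{k+1}(1)$ rather than $\tilde y_{k+1}(1)$. We need to check that $\mathfrak{A}(y_{k+2})$ defined via the actual chain $y_k,y_{k+1},y_{k+2}$ matches the linear combination. This follows from the linearity of the map $(u_0,u_1,u_2)\mapsto \frac{u_2(1)}{c\lambda_k+d}-\frac{cu_1(1)}{(c\lambda_k+d)^2}+\frac{c^2u_0(1)}{(c\lambda_k+d)^3}$ along chains, together with (\ref{(3.16)}) having vanishing $\omega''$ term.

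When $\lambda_k=-\frac{d}{c}$, we instead start from (\ref{(3.21)}), obtained from (\ref{(2.9)}), and repeat the argument verbatim with $y'(1,\lambda)/(a\lambda+b)$ in place of $y(1,\lambda)/(c\lambda+d)$. This produces the second line of (\ref{(4.7)}) and $\mathfrak{A}(y_k)=y_k'(1)/(a\lambda_k+b)$. Note that $a\lambda_k+b\neq0$ here, since $ad-bc\neq0$.
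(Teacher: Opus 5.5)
Your proof is correct and takes essentially the same route as the paper. The paper also differentiates (\ref{(3.17)}) twice (it differentiates (\ref{(3.18)}) once more) at $\lambda=\lambda_k$ to get (\ref{(4.10.1)}) for $\tilde y_{k+2}$, then adds $C$ times (\ref{(3.19)}) and $D$ times (\ref{(3.20)}), and uses (\ref{(3.21)})--(\ref{(3.22)}) when $\lambda_k=-\frac{d}{c}$. Your Taylor expansion of $\omega(\lambda)/(\lambda-\lambda_k)$ and your explicit check that $\mathfrak{A}(y_{k+2})=\mathfrak{A}(\tilde y_{k+2})+C\,\mathfrak{A}(\tilde y_{k+1})+D\,\mathfrak{A}(y_k)$, using $\omega''(\lambda_k)=0$ in (\ref{(3.19)}), only spell out what the paper leaves implicit.
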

\begin{proof}
If $\lambda _{k}\ne -\frac{d}{c}$, then
we take the derivative of (\ref{(3.18)}) with respect to $\lambda$ and substitute $\lambda=\lambda_k$ to obtain
\begin{equation}
 (\tilde{y}_{k+2},y_{k})=- (ad - bc)\mathfrak{A}(\tilde{y}_{k+2})\cdot\mathfrak{A}(y_{k})+\mathfrak{A}(y_k)\frac{\omega^{\prime\prime\prime}(\lambda_k)}{6}. \label{(4.10.1)}
\end{equation}
By adding (\ref{(3.19)}) multiplied by $C$ and (\ref{(3.20)}) multiplied by $D$ to (\ref{(4.10.1)}) we obtain (\ref{(4.10)}).
If $\lambda _{k}=-\frac{d}{c}$, then we take the derivative of (\ref{(3.22)}) with respect to $\lambda$ and substitute $\lambda=\lambda_k$.
\end{proof}

\begin{lem} Suppose that  $\lambda _{k}$ is an eigenvalue of multiplicity three \((\lambda_{k}=\lambda_{k+1} =\lambda_{k+2})\). Then
\begin{equation}
 (y_{k+2},y_{k+1})=- (ad - bc)\mathfrak{A}(y_{k+2})\cdot\mathfrak{A}(y_{k+1})+\mathfrak{A}(\hat{y}_{k+1})\frac{\omega^{\prime\prime\prime}(\lambda_k)}{6}+ \mathfrak{A}({y}_{k})\frac{\omega^{IV}(\lambda_k)}{24}.\label{(4.11)}
\end{equation}
\end{lem}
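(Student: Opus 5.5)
We follow the pattern of the proofs of Lemma 3.3 and Lemma 4.2, working first with the fixed associated functions $\tilde y_{k+1},\tilde y_{k+2}$ and then passing to arbitrary $y_{k+1}, y_{k+2}$ by linearity. Throughout we use that $\lambda_k$ is triple, so $\omega(\lambda_k)=\omega'(\lambda_k)=\omega''(\lambda_k)=0$. We also use that $\partial_\lambda^j y(x,\lambda)\to j!\,\tilde y_{k+j}$ as $\lambda\to\lambda_k$ (with $\tilde y_k=y_k$), and similarly in $\mu$. First assume $\lambda_k\neq-\frac dc$.

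\textbf{Step 1.} Start from the identity (2.8) for $\left(y(\cdot,\lambda),y(\cdot,\mu)\right)$, with all quantities real. Rewrite its last two terms as a single difference quotient:
$$\frac{\omega(\lambda)\,y(1,\mu)/(c\lambda+d)-\omega(\mu)\,y(1,\lambda)/(c\mu+d)}{\lambda-\mu}.$$
Apply $\frac{1}{2}\partial_\lambda^2\,\partial_\mu$ to both sides and let $\lambda,\mu\to\lambda_k$. The left side becomes $(\tilde y_{k+2},\tilde y_{k+1})$.

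In the first term, the product $\frac{y(1,\lambda)}{c\lambda+d}\cdot\frac{y(1,\mu)}{c\mu+d}$ factorizes. Its Taylor coefficients in $\lambda$ and in $\mu$ are exactly $\mathfrak A(\tilde y_{k+2})$ and $\mathfrak A(\tilde y_{k+1})$ by (4.8) and (3.7). This gives $-(ad-bc)\mathfrak A(\tilde y_{k+2})\mathfrak A(\tilde y_{k+1})$.

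\textbf{Step 2.} For the difference-quotient term, write $F(\lambda)=\omega(\lambda)/(c\lambda+d)$ and $G(\lambda)=y(1,\lambda)/(c\lambda+d)$. The term is then $\frac{F(\lambda)G(\mu)-F(\mu)G(\lambda)}{\lambda-\mu}$.

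Expand in powers of $s=\lambda-\lambda_k$ and $t=\mu-\lambda_k$. Here $F=f_3s^3+f_4s^4+\dots$ with $f_3=\frac{\omega'''(\lambda_k)}{6(c\lambda_k+d)}$ and $f_4$ involving $\omega^{IV}$ and $\omega'''$. Also $G=g_0+g_1s+g_2s^2+\dots$ with $g_j=\mathfrak A(\tilde y_{k+j})$.

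Using $\frac{s^m-t^m}{s-t}=\sum s^it^{m-1-i}$, extract the coefficient of $s^2t$. Only the $f_3$ and $f_4$ terms contribute, giving $f_3g_1+f_4g_0$ (minus a correction of the form $f_3g_0\cdot(\dots)$).

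\textbf{Step 3 (main obstacle).} We must check that the $c$-dependent pieces from the factor $(c\lambda+d)^{-1}$ in $F$ combine with those in $g_0,g_1$. The goal is to turn $f_3g_1+f_4g_0$ into $\mathfrak A(\tilde y_{k+1})\frac{\omega'''}{6}+\mathfrak A(y_k)\frac{\omega^{IV}}{24}$, up to the convention used in (3.26) where the same structure appeared with $\omega''$, $\omega'''$. I expect this normalization to match (3.26) after shifting the index by one derivative. Indeed, (3.26) was the case where only $\omega,\omega'$ vanish, and differentiating once more in $\lambda$ under the extra vanishing of $\omega''$ produces the shifted formula. I would verify the bookkeeping by comparison with that computation rather than from scratch.

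\textbf{Step 4.} Pass to general associated functions by linearity, using $y_{k+1}=\tilde y_{k+1}+Cy_k$ and $y_{k+2}=\tilde y_{k+2}+C\tilde y_{k+1}+Dy_k$. Combine the step above with (3.19), (3.26)/(3.27), (4.10.1) and (3.20), each multiplied by the appropriate constant, exactly as in the proofs of Lemmas 3.3 and 4.2. The $\omega'''$-coefficient then collects into $\mathfrak A(\hat y_{k+1})$, with $\hat y_{k+1}=y_{k+1}+Cy_k$. This mirrors how $\hat y_{k+1}$ arose in (3.24) from the two additions of (3.19).

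Finally, the case $\lambda_k=-\frac dc$ is handled identically starting from (2.9). There $y'(1,\cdot)/(a\lambda+b)$ replaces $y(1,\cdot)/(c\lambda+d)$, consistent with the second branches of (3.7) and (4.7).
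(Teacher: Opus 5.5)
Your route is the paper's own. The paper gets $(\tilde{y}_{k+2},\tilde{y}_{k+1})$ by differentiating (\ref{(3.25)}) --- which is $\partial_\lambda$ of (\ref{(2.8)}) at $\lambda=\lambda_k$ --- twice in $\mu$, which is your $\frac12\partial_\lambda^2\partial_\mu$ up to symmetry. It then adds $C$ times (\ref{(3.26)}), $D$ times (\ref{(3.19)}) and $C$ times (\ref{(4.10)}). Your Steps 1 and 4 are fine, with $\omega''(\lambda_k)=0$ inserted in (\ref{(3.19)}) and (\ref{(3.26)}).

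\textbf{The gap: Steps 2--3.} These contain the only genuinely new computation of the lemma, and they do not work as written.

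The identity in Step 2 is false. The last two terms of (\ref{(2.8)}) equal $\frac{F(\lambda)Y(\mu)-F(\mu)Y(\lambda)}{\lambda-\mu}$ with $Y(\lambda)=y(1,\lambda)$, not with $G(\lambda)=y(1,\lambda)/(c\lambda+d)$. Your version carries an extra factor $1/(c\mu+d)$, resp.\ $1/(c\lambda+d)$, on its two pieces.

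With your $F,G$, the coefficient of $s^2t$ is exactly $f_3g_1+f_4g_0$; there is no further ``correction''. This equals $\frac{1}{c\lambda_k+d}\bigl(\mathfrak{A}(\tilde{y}_{k+1})\frac{\omega'''(\lambda_k)}{6}+\mathfrak{A}(y_k)\frac{\omega^{IV}(\lambda_k)}{24}\bigr)-\frac{c\,\omega'''(\lambda_k)\,y_k(1)}{6(c\lambda_k+d)^3}$, which is not the required quantity. So Step 3 cannot be closed by analogy with (\ref{(3.26)}), and as written you never verify the identity the lemma rests on.

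\textbf{The repair is short.} Put $e=c\lambda_k+d$, $s=\lambda-\lambda_k$, $t=\mu-\lambda_k$. Write $Y(\lambda)=Y_0+Y_1s+Y_2s^2+\dots$ with $Y_0=y_k(1)$ and $Y_j=\tilde{y}_{k+j}(1)$. Write $F(\lambda)=f_3s^3+f_4s^4+\dots$ with $f_3=\frac{\omega'''(\lambda_k)}{6e}$ and $f_4=\frac{\omega^{IV}(\lambda_k)}{24e}-\frac{c\,\omega'''(\lambda_k)}{6e^2}$.

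In $\frac{F(\lambda)Y(\mu)-F(\mu)Y(\lambda)}{s-t}$, only the pairs $(f_3,Y_1)$ and $(f_4,Y_0)$ reach total degree $3$. They do so through $\frac{s^3t-st^3}{s-t}=s^2t+st^2$ and $\frac{s^4-t^4}{s-t}=s^3+s^2t+st^2+t^3$, each contributing $s^2t$ with coefficient $1$.

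Hence the $s^2t$-coefficient is $f_3Y_1+f_4Y_0=\frac{\omega'''(\lambda_k)}{6}\bigl(\frac{\tilde{y}_{k+1}(1)}{e}-\frac{c\,y_k(1)}{e^2}\bigr)+\frac{\omega^{IV}(\lambda_k)}{24}\cdot\frac{y_k(1)}{e}$. This is exactly $\mathfrak{A}(\tilde{y}_{k+1})\frac{\omega'''(\lambda_k)}{6}+\mathfrak{A}(y_k)\frac{\omega^{IV}(\lambda_k)}{24}$. The $-c/e^2$ term coming from $(c\lambda+d)^{-1}$ inside $F$ is precisely what completes $\tilde{y}_{k+1}(1)/e$ to $\mathfrak{A}(\tilde{y}_{k+1})$.

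This is (\ref{(4.12)}), after which your Step 4 yields (\ref{(4.11)}). The case $\lambda_k=-d/c$ goes the same way from (\ref{(2.9)}) with $F=\omega/(a\lambda+b)$ and $Y=y'(1,\cdot)$.
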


\begin{proof}
If $\lambda _{k}\ne -\frac{d}{c}$, then
we take the derivative of (\ref{(3.25)}) with respect to $\mu$ twice and substitute $\mu=\lambda_k$ to obtain 
\begin{equation}
 (\tilde{y}_{k+2},\tilde{y}_{k+1})=- (ad - bc)\mathfrak{A}(\tilde{y}_{k+2})\cdot\mathfrak{A}(\tilde{y}_{k+1})+\mathfrak{A}(\tilde{y}_{k+1})\frac{\omega^{\prime\prime\prime}(\lambda_k)}{6}+ \mathfrak{A}({y}_{k})\frac{\omega^{IV}(\lambda_k)}{24}.\label{(4.12)}
\end{equation}
By adding (\ref{(3.26)}) multiplied by $C$ and (\ref{(3.19)}) multiplied by $D$ to (\ref{(4.12)}) we obtain 
\begin{equation}
 (y_{k+2},\tilde{y}_{k+1})=- (ad - bc)\mathfrak{A}(y_{k+2})\cdot\mathfrak{A}(\tilde{y}_{k+1})+\mathfrak{A}({y}_{k+1})\frac{\omega^{\prime\prime\prime}(\lambda_k)}{6}+ \mathfrak{A}({y}_{k})\frac{\omega^{IV}(\lambda_k)}{24}.\label{(4.13)}
\end{equation}
By adding (\ref{(4.10)}) multiplied by $C$ to (\ref{(4.13)}) we obtain (\ref{(4.11)}). If $\lambda _{k}=-\frac{d}{c}$, then we take the derivatives of (\ref{(2.9)}).
\end{proof}

\begin{lem} Suppose that  $\lambda _{k}$ is an eigenvalue of multiplicity three \((\lambda_{k}=\lambda_{k+1} =\lambda_{k+2})\). Then
$$
 {\left\| {y_{k+2}} \right\|}^{2}_{2}=(y_{k+2},y_{k+2})=- (ad - bc)(\mathfrak{A}(y_{k+2}))^2
 $$
 \begin{equation}
 +\mathfrak{A}(\hat{y}_{k+2})\frac{\omega^{\prime\prime\prime}(\lambda_k)}{6}+
 \mathfrak{A}(\hat{y}_{k+1})\frac{\omega^{IV}(\lambda_k)}{24}+ \mathfrak{A}({y}_{k})\frac{\omega^{V}(\lambda_k)}{120},\label{(4.14)}
\end{equation}
where $\hat{y}_{k+2}={y}_{k+2}+Cy_{k+1}+Dy_k$ and
\begin{equation}
\mathfrak{A}(\hat{y}_{k+2}) =
\begin{cases}
\displaystyle  
\frac{\hat{y}_{k + 2}(1)}{c\lambda_{k} + d} - \frac{c\hat{y}_{k+1}(1)}{\left( c\lambda_{k} + d \right)^{2}}
+\frac{c^2y_{k}(1)}{\left( c\lambda_{k} + d \right)^{3}}, 
&  \text{if }  \lambda_k \neq -\dfrac{d}{c}, \\[10pt]
\displaystyle \ 
\frac{\hat{y}'_{k + 2}(1)}{a\lambda_{k} + b} - \frac{a\hat{y}'_{k+1}(1)}{\left( a\lambda_{k} + b \right)^{2}}
+\frac{a^2y'_{k}(1)}{\left( a\lambda_{k} + b\right)^{3}}, 
& \text{if } \lambda_k = -\dfrac{d}{c}.
\end{cases}\label{(4.14.1)}
\end{equation}
\end{lem}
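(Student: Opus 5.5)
We follow the scheme used for Lemma 3.5 and Lemma 4.3, treating first the case $\lambda_k\neq -\frac{d}{c}$; the case $\lambda_k=-\frac{d}{c}$ is handled by the same steps applied to (\ref{(2.9)}) in place of (\ref{(2.8)}). All eigenvalues are real in the triple case, so conjugations can be dropped.

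\emph{Step 1: the formula for $\tilde y_{k+2}$.} Differentiate (\ref{(2.8)}) twice in $\lambda$ and set $\lambda=\lambda_k$. Since $\omega(\lambda_k)=\omega'(\lambda_k)=\omega''(\lambda_k)=0$ and $y_{\lambda\lambda}\to 2\tilde y_{k+2}$, we obtain after dividing by $2$
\[
(\tilde y_{k+2},y(\cdot,\mu))=-(ad-bc)\mathfrak{A}(\tilde y_{k+2})\frac{y(1,\mu)}{c\mu+d}+R(\mu).
\]
Here $R(\mu)$ collects the terms containing $\omega(\mu)$, namely $\omega(\mu)/(c\mu+d)$ multiplied by a rational expression in $(\lambda_k-\mu)^{-1}$ of order up to $3$. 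Its coefficients involve $\tilde y_{k+2}(1)$, $\tilde y_{k+1}(1)$ and $y_k(1)$. This is the analogue of (\ref{(3.25)}).

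\emph{Step 2: differentiate in $\mu$.} Differentiate twice in $\mu$ and let $\mu\to\lambda_k$. The left side becomes $2(\tilde y_{k+2},\tilde y_{k+2})$, and the first term on the right gives $-(ad-bc)\cdot 2(\mathfrak{A}(\tilde y_{k+2}))^2$. For $R(\mu)$, expand $\omega(\mu)=\sum_{j\ge3}\omega^{(j)}(\lambda_k)(\mu-\lambda_k)^j/j!$ and also expand $1/(c\mu+d)$ in powers of $\mu-\lambda_k$. The singular factors $(\lambda_k-\mu)^{-1},(\lambda_k-\mu)^{-2},(\lambda_k-\mu)^{-3}$ pair with $\omega'''$, $\omega^{IV}$, $\omega^{V}$ respectively. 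The resulting coefficients should be exactly the combinations $\mathfrak{A}(\tilde y_{k+2})$, $\mathfrak{A}(\tilde y_{k+1})$, $\mathfrak{A}(y_k)$ in (\ref{(4.8)}) and (\ref{(3.7)}). This is because the factor $1/(c\mu+d)$ produces precisely the alternating powers $c^j/(c\lambda_k+d)^{j+1}$. The resulting identity is
\[
(\tilde y_{k+2},\tilde y_{k+2})=-(ad-bc)(\mathfrak{A}(\tilde y_{k+2}))^2+\mathfrak{A}(\tilde y_{k+2})\tfrac{\omega'''(\lambda_k)}{6}+\mathfrak{A}(\tilde y_{k+1})\tfrac{\omega^{IV}(\lambda_k)}{24}+\mathfrak{A}(y_k)\tfrac{\omega^{V}(\lambda_k)}{120}.
\]
This bookkeeping is the main obstacle. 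We would organize it by writing everything as a Laurent expansion in $\mu-\lambda_k$ and reading off the coefficient of $(\mu-\lambda_k)^2$. This is the same mechanism that produced (\ref{(3.26)}) and (\ref{(4.12)}), and those cases serve as a check.

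\emph{Step 3: pass to general $y_{k+2}$.} Write $y_{k+2}=\tilde y_{k+2}+C\tilde y_{k+1}+Dy_k$, where $y_{k+1}=\tilde y_{k+1}+Cy_k$. Expand $(y_{k+2},y_{k+2})$ bilinearly. The needed inner products $(\tilde y_{k+2},\tilde y_{k+1})$, $(\tilde y_{k+2},y_k)$, $(\tilde y_{k+1},\tilde y_{k+1})$, $(\tilde y_{k+1},y_k)$ and $(y_k,y_k)$ are given by (\ref{(4.12)}), (\ref{(4.10.1)}), (\ref{(3.26)}), (\ref{(3.19)}) and (\ref{(3.20)}), with $\omega''(\lambda_k)=0$ in the triple case. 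The map $\mathfrak{A}$ is linear, so the terms $-(ad-bc)\mathfrak{A}(\cdot)\mathfrak{A}(\cdot)$ recombine into $-(ad-bc)(\mathfrak{A}(y_{k+2}))^2$. The remaining $\omega'''$, $\omega^{IV}$ and $\omega^{V}$ terms then carry coefficients that are linear combinations of $\mathfrak{A}(y_{k+2})$, $\mathfrak{A}(y_{k+1})$, $\mathfrak{A}(y_k)$ with extra multiples of $C$ and $D$. The expansion is asymmetric: cross terms appear twice, but the $\omega$-corrections attach to only one factor. For this reason these extra multiples combine into $\mathfrak{A}(\hat y_{k+2})$ and $\mathfrak{A}(\hat y_{k+1})$ with $\hat y_{k+2}=y_{k+2}+Cy_{k+1}+Dy_k$ and $\hat y_{k+1}=y_{k+1}+Cy_k$. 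We would verify this by grouping the expansion in two stages, exactly as in the passage from (\ref{(3.26)}) through (\ref{(3.27)}) to (\ref{(3.24)}). This yields (\ref{(4.14)}).
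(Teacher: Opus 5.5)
Your proposal is correct and follows the paper's own proof essentially step for step. The paper likewise derives (\ref{(4.15)}) for $(\tilde y_{k+2},\tilde y_{k+2})$ by differentiating twice in $\lambda$ and twice in $\mu$ at $\lambda_k$. It then reaches (\ref{(4.14)}) in two stages: adding $C\cdot$(\ref{(4.12)}) and $D\cdot$(\ref{(4.10.1)}) gives (\ref{(4.16)}), and adding $C\cdot$(\ref{(4.13)}) and $D\cdot$(\ref{(4.10)}) then gives (\ref{(4.14)}). When $\lambda_k=-\frac{d}{c}$ it uses (\ref{(2.9)}) in place of (\ref{(2.8)}).

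One wording in Step~2 needs tightening. It is the Taylor coefficients of $\omega(\mu)/(c\mu+d)$, not of $\omega(\mu)$, that pair one-to-one with $(\lambda_k-\mu)^{-1},(\lambda_k-\mu)^{-2},(\lambda_k-\mu)^{-3}$. Rewriting those coefficients in terms of $\omega'''(\lambda_k),\omega^{IV}(\lambda_k),\omega^{V}(\lambda_k)$ is exactly what assembles $\mathfrak{A}(\tilde y_{k+2})$, $\mathfrak{A}(\tilde y_{k+1})$ and $\mathfrak{A}(y_k)$, as your planned computation would show.
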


\begin{proof}
If $\lambda _{k}\ne -\frac{d}{c}$, then
we take the derivative of (\ref{(3.25)}) with respect to $\lambda$  twice and with respect to $\mu$ twice and substitute first $\lambda=\lambda_k$ and then $\mu=\lambda_k$ to obtain 
$$
 (\tilde{y}_{k+2},\tilde{y}_{k+2})=- (ad - bc)(\mathfrak{A}(\tilde{y}_{k+2}))^2
 $$
 \begin{equation}
 +\mathfrak{A}(\tilde{y}_{k+2})\frac{\omega^{\prime\prime\prime}(\lambda_k)}{6}+
 \mathfrak{A}(\tilde{y}_{k+1})\frac{\omega^{IV}(\lambda_k)}{24}+ \mathfrak{A}({y}_{k})\frac{\omega^{V}(\lambda_k)}{120}.\label{(4.15)}
\end{equation}
By adding (\ref{(4.12)}) multiplied by $C$ and (\ref{(4.10.1)}) multiplied by $D$ to (\ref{(4.15)}) we obtain 
$$
 (\tilde{y}_{k+2},{y}_{k+2})=- (ad - bc)\mathfrak{A}(\tilde{y}_{k+2})\cdot\mathfrak{A}({y}_{k+2})
 $$
\begin{equation}
+\mathfrak{A}({y}_{k+2})\frac{\omega^{\prime\prime\prime}(\lambda_k)}{6}+ \mathfrak{A}({y}_{k+1})\frac{\omega^{IV}(\lambda_k)}{24}+\mathfrak{A}({y}_{k})\frac{\omega^{V}(\lambda_k)}{120}.\label{(4.16)}
\end{equation}
By adding (\ref{(4.13)}) multiplied by $C$ and (\ref{(4.10)}) multiplied by $D$ to (\ref{(4.16)}) we obtain (\ref{(4.14)})
If $\lambda _{k}=-\frac{d}{c}$, then we take the derivatives of (\ref{(2.9)}).
\end{proof}
\section{Associated functions $y_{k+1}^*$, $y_{k+1}^\#$ and $y_{k+2}^\#$.} 
\begin{lem}
Let $\lambda_k$ be an eigenvalue of multiplicity two and
\(
y_{k+1}^* = y_{k+1} + C_1 y_k,
\)
where
\begin{equation}
    C_1=-\frac{\mathfrak{A}(\hat{y}_{k+1})\frac{\omega^{\prime\prime}(\lambda_k)}{2}+\mathfrak{A}(y_k)\frac{\omega^{\prime\prime\prime}(\lambda_k)}{6}}{\mathfrak{A}(y_k)\frac{\omega^{\prime\prime}(\lambda_k)}{2}}. \label{(5.1)}
\end{equation}
Then 
\begin{equation}
(y_{k+1}^*, y_{k+1}) =-(ad-bc)\mathfrak{A}(y_{k+1}^*)\mathfrak{A}(y_{k+1}), \label{(5.2)}
\end{equation}
where 
\begin{equation}
 \mathfrak{A}(y^{*}_{k+1}) =
\begin{cases}
\displaystyle  
\frac{y^{*}_{k + 1}(1)}{c\lambda_{k}+d} - \frac{cy_{k}(1)}{{(c\lambda_{k}+d)}^{2}}, 
& \text{if } \lambda_k \neq -\dfrac{d}{c}, \\[10pt]
\displaystyle \ 
\frac{(y^{*}_{k + 1})'(1)}{a\lambda_{k}+b} - \frac{ay'_{k}(1)}{{(a\lambda_{k}+b)}^{2}}, 
&  \text{if } \lambda_k = -\dfrac{d}{c}.
\end{cases} \label{(5.2.1)}
\end{equation}
\end{lem}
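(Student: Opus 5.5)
Since $y^*_{k+1}=y_{k+1}+C_1y_k$, bilinearity gives
\[
(y^*_{k+1},y_{k+1})=(y_{k+1},y_{k+1})+C_1(y_k,y_{k+1}).
\]
The plan is to expand both terms with the lemmas already proved and then show that the choice (\ref{(5.1)}) of $C_1$ kills every term involving derivatives of $\omega$. No complex conjugates are needed, because multiple eigenvalues occur only when all eigenvalues are real. Hence $(y_k,y_{k+1})=(y_{k+1},y_k)$, and this is given by (\ref{(3.16)}). The norm $(y_{k+1},y_{k+1})$ is given by (\ref{(3.24)}).

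First we note that $\mathfrak{A}$ is linear in the top function while the lower-order terms stay fixed. From (\ref{(3.7)}) and (\ref{(5.2.1)}),
\[
\mathfrak{A}(y^*_{k+1})=\mathfrak{A}(y_{k+1})+C_1\,\mathfrak{A}(y_k)\cdot(\text{appropriate factor}).
\]
Here one has to check the normalization carefully. In the case $\lambda_k\neq -d/c$, the top term of $\mathfrak{A}(y_{k+1})$ is $y_{k+1}(1)/(c\lambda_k+d)$, and replacing $y_{k+1}$ by $y_{k+1}+C_1y_k$ adds $C_1y_k(1)/(c\lambda_k+d)=C_1\mathfrak{A}(y_k)$. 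So $\mathfrak{A}(y^*_{k+1})=\mathfrak{A}(y_{k+1})+C_1\mathfrak{A}(y_k)$. The case $\lambda_k=-d/c$ works the same way with $y'$ and $a\lambda_k+b$.

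Next we substitute:
\[
(y^*_{k+1},y_{k+1}) = -(ad-bc)\mathfrak{A}(y_{k+1})^2+\mathfrak{A}(\hat y_{k+1})\tfrac{\omega''(\lambda_k)}{2}+\mathfrak{A}(y_k)\tfrac{\omega'''(\lambda_k)}{6}
+C_1\Bigl(-(ad-bc)\mathfrak{A}(y_{k+1})\mathfrak{A}(y_k)+\mathfrak{A}(y_k)\tfrac{\omega''(\lambda_k)}{2}\Bigr).
\]
The two $(ad-bc)$ terms combine to $-(ad-bc)\mathfrak{A}(y_{k+1})\bigl(\mathfrak{A}(y_{k+1})+C_1\mathfrak{A}(y_k)\bigr)=-(ad-bc)\mathfrak{A}(y_{k+1})\mathfrak{A}(y^*_{k+1})$. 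The remaining terms are
\[
\mathfrak{A}(\hat y_{k+1})\tfrac{\omega''}{2}+\mathfrak{A}(y_k)\tfrac{\omega'''}{6}+C_1\mathfrak{A}(y_k)\tfrac{\omega''}{2},
\]
and these vanish exactly by the definition (\ref{(5.1)}) of $C_1$. This proves (\ref{(5.2)}).

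For the definition (\ref{(5.1)}) to make sense, the denominator must be nonzero. Since $\lambda_k$ is exactly double, $\omega''(\lambda_k)\neq0$. Also $\mathfrak{A}(y_k)\neq0$, because by (\ref{(2.17)}) we have $\|y_k\|^2=-(ad-bc)\mathfrak{A}^2(y_k)>0$.

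The main obstacle is bookkeeping of $\hat y_{k+1}$ in (\ref{(3.24)}): it depends on the constant $C$ linking $y_{k+1}$ to the fixed $\tilde y_{k+1}$. One must be consistent that the same $\hat y_{k+1}$ (computed from the given $y_{k+1}$) appears both in (\ref{(3.24)}) and in (\ref{(5.1)}). Since $C_1$ is defined with that very quantity, the cancellation is exact. The argument also relies on $\mathfrak{A}(y_k)$ in (\ref{(3.16)}) matching the factor used in (\ref{(5.1)}), and it does.
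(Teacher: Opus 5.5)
Your proof is correct and follows the paper's own argument: the paper's proof is the one line ``add (\ref{(3.16)}) multiplied by $C_1$ to (\ref{(3.24)}),'' and you spell out that step, using the symmetry $(y_k,y_{k+1})=(y_{k+1},y_k)$ for real functions and the identity $\mathfrak{A}(y^*_{k+1})=\mathfrak{A}(y_{k+1})+C_1\mathfrak{A}(y_k)$, both of which the paper leaves implicit. Your check that $C_1$ is well defined is a useful detail the paper omits: $\mathfrak{A}(y_k)\neq0$ follows from (\ref{(2.17)}), and $\omega''(\lambda_k)\neq0$ relies on the standard fact, also implicit in the paper, that the order of the zero of $\omega$ equals the algebraic multiplicity.
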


\begin{proof} By adding (\ref{(3.16)}) multiplied by $C_1$ to (\ref{(3.24)}) we obtain (\ref{(5.2)}).
\end{proof}
We note that the associated function $y_{k+1}^*$ satisfies (\ref{(3.8)}) and (\ref{(3.16)}), too.
From now on we will discuss only the case of a triple eigenvalue. It is worthwhile to note that if $\lambda_k$ is a triple eigenvalue, then $C_1$ is undefined.
\begin{lem}
Let $\lambda_k$ be an eigenvalue of multiplicity three and
\(
y_{k+1}^\# = y_{k+1} + C_2 y_k,
\)
where
\begin{equation}
    C_2=-\frac{\mathfrak{A}(\hat{y}_{k+1})\frac{\omega^{\prime\prime\prime}(\lambda_k)}{6}+\mathfrak{A}(y_k)\frac{\omega^{IV}(\lambda_k)}{24}}{\mathfrak{A}(y_k)\frac{\omega^{\prime\prime\prime}(\lambda_k)}{6}}. \label{(5.3)}
\end{equation}
Then 
\begin{equation}
(y_{k+1}^\#, y_{k+2}) =-(ad-bc)\mathfrak{A}(y_{k+1}^\#)\mathfrak{A}(y_{k+2}), \label{(5.4)}
\end{equation}
where
\begin{equation}
 \mathfrak{A}(y^{\#}_{k+1}) =
\begin{cases}
\displaystyle  
\frac{y^{\#}_{k + 1}(1)}{c\lambda_{k}+d} - \frac{cy_{k}(1)}{{(c\lambda_{k}+d)}^{2}}, 
& \text{if } \lambda_k \neq -\dfrac{d}{c}, \\[10pt]
\displaystyle \ 
\frac{(y^{\#}_{k + 1})'(1)}{a\lambda_{k}+b} - \frac{ay'_{k}(1)}{{(a\lambda_{k}+b)}^{2}}, 
& \text{if } \lambda_k = -\dfrac{d}{c}.
\end{cases} \label{(5.5.1)}  
\end{equation}

\end{lem}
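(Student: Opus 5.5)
The argument parallels the proof of Lemma 5.1: we take a linear combination of two identities already proved for the triple eigenvalue. The lemma concerns $(y_{k+1}^\#, y_{k+2}) = (y_{k+1},y_{k+2}) + C_2 (y_k,y_{k+2})$, so we need formulas for $(y_{k+1},y_{k+2})$ and $(y_k,y_{k+2})$. Since all eigenvalues are real in the multiple-eigenvalue case, these inner products are symmetric. Lemma 4.3 (formula (\ref{(4.11)})) therefore gives
\[
(y_{k+1},y_{k+2})=- (ad - bc)\mathfrak{A}(y_{k+2})\mathfrak{A}(y_{k+1})+\mathfrak{A}(\hat{y}_{k+1})\frac{\omega^{\prime\prime\prime}(\lambda_k)}{6}+ \mathfrak{A}({y}_{k})\frac{\omega^{IV}(\lambda_k)}{24},
\]
and Lemma 4.2 (formula (\ref{(4.10)})) gives
\[
(y_k,y_{k+2})=- (ad - bc)\mathfrak{A}(y_{k+2})\mathfrak{A}(y_{k})+\mathfrak{A}(y_k)\frac{\omega^{\prime\prime\prime}(\lambda_k)}{6}.
\]

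Multiply the second identity by $C_2$ and add it to the first. The terms $-(ad-bc)\mathfrak{A}(y_{k+2})\bigl(\mathfrak{A}(y_{k+1})+C_2\mathfrak{A}(y_k)\bigr)$ combine into $-(ad-bc)\mathfrak{A}(y_{k+2})\mathfrak{A}(y_{k+1}^\#)$. This uses the linearity of $\mathfrak{A}$ in the associated-function slot: by (\ref{(5.5.1)}) and (\ref{(3.7)}), with the $y_k$-term of (\ref{(3.7)}) fixed, adding $C_2y_k$ to $y_{k+1}$ adds $C_2\,y_k(1)/(c\lambda_k+d)=C_2\mathfrak{A}(y_k)$. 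The analogous statement holds with $y'$ and $a\lambda_k+b$ when $\lambda_k=-d/c$. The remaining terms are
\[
\mathfrak{A}(\hat{y}_{k+1})\frac{\omega^{\prime\prime\prime}(\lambda_k)}{6}+\mathfrak{A}(y_k)\frac{\omega^{IV}(\lambda_k)}{24}+C_2\,\mathfrak{A}(y_k)\frac{\omega^{\prime\prime\prime}(\lambda_k)}{6}.
\]
By the definition (\ref{(5.3)}) of $C_2$, this sum is exactly zero, which yields (\ref{(5.4)}).

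The formal computation is immediate. The one point needing care is that $\hat{y}_{k+1}$ in (\ref{(5.3)}) must be the same function that appears in (\ref{(4.11)}), i.e. $\hat y_{k+1}=y_{k+1}+Cy_k$ with $C$ coming from the normalization of $y_{k+2}$ relative to $\tilde y_{k+1},\tilde y_{k+2}$. We will check this consistency against the derivation of (\ref{(4.11)}). We also need the denominator $\mathfrak{A}(y_k)\omega'''(\lambda_k)/6$ to be nonzero. For a triple eigenvalue (and not of multiplicity four), $\omega'''(\lambda_k)\neq0$. Moreover $\mathfrak{A}(y_k)\ne0$, since otherwise (\ref{(2.17)}) would force $\|y_k\|=0$. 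Hence $C_2$ is well defined, and both cases $\lambda_k\neq -d/c$ and $\lambda_k=-d/c$ are handled uniformly by the above argument.
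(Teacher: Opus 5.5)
Your proof is correct and is the paper's own argument: the paper also obtains (\ref{(5.4)}) by adding (\ref{(4.10)}) multiplied by $C_2$ to (\ref{(4.11)}), and the choice of $C_2$ cancels the $\omega'''$ and $\omega^{IV}$ terms. Your extra checks make explicit what the paper leaves implicit: that $\mathfrak{A}$ shifts correctly under $y_{k+1}\mapsto y_{k+1}+C_2y_k$, and that $C_2$ is well defined because $\omega'''(\lambda_k)\neq0$ and, by (\ref{(2.17)}), $\mathfrak{A}(y_k)\neq0$.
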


\begin{proof} By adding (\ref{(4.10)}) multiplied by $C_2$ to (\ref{(4.11)}) we obtain (\ref{(5.4)}).
\end{proof}
We note that $y^{\#}_{k+1}$ satisfies (\ref{(3.8)}), (\ref{(3.16)}), and 
\begin{equation}
(y_{k+1}^\#, y_{k+1}) =-(ad-bc)\mathfrak{A}(y_{k+1}^\#)\mathfrak{A}(y_{k+1})+\mathfrak{A}(y_{k})\frac{\omega^{\prime\prime\prime}(\lambda_k)}{6} \label{(5.5)}
\end{equation}
Note that the function \(y^{*}_{k+2}=y_{k+2}+C_{2}y_{k+1}\), satisfies (\ref{(4.9)}) and (\ref{(4.10)}) with $\mathfrak{A}(y_{k+2})$ replaced by 
\begin{equation}
   \mathfrak{A}(y^{*}_{k+2}) =
\begin{cases}
\displaystyle  
\frac{y^{*}_{k + 2}(1)}{c\lambda_{k} + d} - \frac{cy^{\#}_{k+1}(1)}{\left( c\lambda_{k} + d \right)^{2}}
+\frac{c^2y_{k}(1)}{\left( c\lambda_{k} + d \right)^{3}}, 
& \text{if } \lambda_k \neq -\dfrac{d}{c}, \\[10pt]
\displaystyle \ 
\frac{(y^{*}_{k + 2})'(1)}{a\lambda_{k} + b} - \frac{a(y^{\#}_{k+1})'(1)}{\left( a\lambda_{k} + b \right)^{2}}
+\frac{a^2y'_{k}(1)}{\left( a\lambda_{k} + b\right)^{3}}, 
& \text{if } \lambda_k = -\dfrac{d}{c}.
\end{cases} \label{(5.5.2)}
\end{equation}
The function $y^{*}_{k + 2}$ also satisfies 
\begin{equation}
(y_{k+2}^*, y_{k+1}) =-(ad-bc)\mathfrak{A}(y_{k+2}^*)\mathfrak{A}(y_{k+1}), \label{(5.6)}
\end{equation}
\begin{equation}
(y_{k+2}^*, y_{k+2}) =-(ad-bc)\mathfrak{A}(y_{k+2}^*)\mathfrak{A}(y_{k+2})+J_k, \label{(5.7)}
\end{equation}
where 
 \begin{equation}
   J_k 
    ={\mathfrak{A}(\hat{y}_{k+2})\frac{\omega^{\prime\prime\prime}(\lambda_k)}{6}+\mathfrak{A}(\hat{y}_{k+1})\frac{\omega^{IV}(\lambda_k)}{24}}+{\mathfrak{A}(y_k)\frac{\omega^{V}(\lambda_k)}{120}}-C_2^2\mathfrak{A}(y_k)\frac{\omega^{\prime\prime\prime}(\lambda_k)}{6}\label{(5.8)}
\end{equation}

\begin{lem}
Let $\lambda_k$ be an eigenvalue of multiplicity three and
$y^{\#}_{k+2}=y^{*}_{k+2}+D_{1}y_{k}$
where
\begin{equation}
    D_1 =-\frac{J_k}{\mathfrak{A}(y_k)\frac{\omega^{\prime\prime\prime}(\lambda_k)}{6}}. \label{(5.9)}
\end{equation}
Then 
\begin{equation}
(y_{k+2}^\#, y_{k+2}) =-(ad-bc)\mathfrak{A}(y_{k+2}^\#)\mathfrak{A}(y_{k+2}), \label{(5.10)}
\end{equation}
where
\begin{equation}
\mathfrak{A}(y^{\#}_{k+2}) =
\begin{cases}
\displaystyle  
\frac{y^{\#}_{k + 2}(1)}{c\lambda_{k} + d} - \frac{cy^{\#}_{k+1}(1)}{\left( c\lambda_{k} + d \right)^{2}}
+\frac{c^2y_{k}(1)}{\left( c\lambda_{k} + d \right)^{3}}, 
& \text{if } \lambda_k \neq -\dfrac{d}{c}, \\[10pt]
\displaystyle \ 
\frac{(y^{\#}_{k + 2})'(1)}{a\lambda_{k} + b} - \frac{a(y^{\#}_{k+1})'(1)}{\left( a\lambda_{k} + b \right)^{2}}
+\frac{a^2y'_{k}(1)}{\left( a\lambda_{k} + b\right)^{3}}, 
& \text{if }  \lambda_k = -\dfrac{d}{c}.
\end{cases}\label{(5.5.3)}  
\end{equation}
\end{lem}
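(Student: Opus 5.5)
The plan is to follow the pattern of Lemmas 5.1 and 5.2. We start from the two relations already stated for $y^{*}_{k+2}$: relation (5.7) for its inner product with $y_{k+2}$, and relation (4.10) (valid for $y^*_{k+2}$ with $\mathfrak{A}(y_{k+2})$ replaced by $\mathfrak{A}(y^{*}_{k+2})$) for its inner product with $y_k$. Because the inner product is linear in the first argument, we have
$$
(y^{\#}_{k+2},y_{k+2})=(y^{*}_{k+2},y_{k+2})+D_1(y_k,y_{k+2}).
$$
So the work reduces to computing $(y_k,y_{k+2})$. Since $\lambda_k$ is a triple eigenvalue, all the root functions are real, and hence $(y_k,y_{k+2})=(y_{k+2},y_k)$. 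We then take this value from (4.10).

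Substituting (5.7) and (4.10) gives
$$
(y^{\#}_{k+2},y_{k+2})=-(ad-bc)\bigl(\mathfrak{A}(y^{*}_{k+2})+D_1\mathfrak{A}(y_k)\bigr)\mathfrak{A}(y_{k+2})+J_k+D_1\mathfrak{A}(y_k)\frac{\omega^{\prime\prime\prime}(\lambda_k)}{6}.
$$
By the choice (5.9) of $D_1$, the last two terms cancel; this is well defined because $\mathfrak{A}(y_k)\neq0$ and $\omega'''(\lambda_k)\neq0$ for a triple eigenvalue. It then remains to check that
$$
\mathfrak{A}(y^{\#}_{k+2})=\mathfrak{A}(y^{*}_{k+2})+D_1\mathfrak{A}(y_k).
$$
This follows directly from definitions (5.5.3), (5.5.2) and (2.11). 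In both cases the functional is linear in the first slot, and adding $D_1y_k$ to $y^*_{k+2}$ changes only the leading term, by $D_1y_k(1)/(c\lambda_k+d)=D_1\mathfrak{A}(y_k)$, or in the second case by $D_1y_k'(1)/(a\lambda_k+b)=D_1\mathfrak{A}(y_k)$. The lower terms, which involve $y^{\#}_{k+1}$ and $y_k$, are identical in the two definitions.

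The main obstacle is justifying the input (5.7) together with the correct form of $J_k$, which the statement assumes. I would verify it by expanding $(y_{k+2}+C_2y_{k+1},y_{k+2})$ using (4.14) and the conjugate-symmetric version of (4.11), then collecting terms and using the definition (5.3) of $C_2$ to simplify the $\mathfrak{A}(\hat y)$ contributions into the expression (5.8). This is a routine but careful bookkeeping step. I would also check the side condition that $\mathfrak{A}(y_k)\ne0$ when $\lambda_k$ is triple; this follows from (2.17), since $\|y_k\|^2>0$ and $ad-bc<0$.
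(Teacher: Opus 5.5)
Your proposal is correct and is essentially the paper's own argument. You add $D_1$ times (4.10) to (5.7), using $(y_k,y_{k+2})=(y_{k+2},y_k)$ because the root functions are real, observe that $\mathfrak{A}(y^{\#}_{k+2})=\mathfrak{A}(y^{*}_{k+2})+D_1\mathfrak{A}(y_k)$, and let the choice (5.9) of $D_1$ cancel $J_k$. The paper's one-line proof cites (5.6), but (5.7) is clearly what is meant, and your sketched verification of (5.7)--(5.8) from (4.11), (4.14) and (5.3) does go through.
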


\begin{proof} By adding (\ref{(4.10)}) multiplied by $D_1$ to (\ref{(5.6)}) we obtain (\ref{(5.10)}).
\end{proof}
We note that $y^{\#}_{k+2}$ also satisfies (\ref{(4.9)}), (\ref{(4.10)}) with  $\mathfrak{A}(y_{k+2})$ replaced by $\mathfrak{A}(y_{k+2}^\#)$, and 
\begin{equation}
(y_{k+2}^\#, y_{k+1}) =-(ad-bc)\mathfrak{A}(y_{k+2}^\#)\mathfrak{A}(y_{k+1}). \label{(5.11)}
\end{equation}
\section{Minimality properties of the root functions}
In \cite{aliyev5} some results for the basis properties were proved.  Below we will provide a new form for the biorthogonal system using the characteristic function in these cases. Also, the cases with the necessary and sufficient conditions will be given a new form.
\subsection{Case (i).}

 In \cite{aliyev5} it was proved that in case (i), where  all the eigenvalues of (\ref{(1.1)})--(\ref{(1.3)}) are real and simple, the system
\begin{equation}
  \{ y_n \} \quad (n = 0,1,\ldots;\; n \neq l), \label{(6.1)} 
\end{equation}
where $l$ is a nonnegative integer, is minimal in space $L_2(0,1)$. In view of the results of the current paper the biorthogonal system
\begin{equation}
\{ u_n \} \quad (n = 0,1,\ldots;\; n \neq l), \label{(6.2)}
\end{equation}
satisfying $(u_n, y_m) = \delta_{nm}$, where $\delta_{nm}=0$ for $n \neq m$ and $\delta_{nn}=1$, can be written as 
\begin{equation}
u_n(x)=\frac{y_n(x)-\frac{\mathfrak{A}(y_{n})}{\mathfrak{A}(y_l)}\cdot y_l(x)}{ \mathfrak{A}(y_{n})\omega^\prime(\lambda_n)}
. \label{(6.3)}
\end{equation}
If $n\neq m$, then by Lemma 2.1,
\begin{equation}
(u_n, y_m)=\frac{(y_n,y_m)-\frac{\mathfrak{A}(y_{n})}{\mathfrak{A}(y_l)}\cdot (y_l, y_m)}{ \mathfrak{A}(y_{n})\omega^\prime(\lambda_n)}=0.
\label{(6.4)}
\end{equation}
If $n=m$, then by Lemma 2.1 and Lemma 2.3,
\begin{equation}
(u_n, y_n)=\frac{\|y_n\|_2^2-\frac{\mathfrak{A}(y_{n})}{\mathfrak{A}(y_l)}\cdot (y_l, y_n)}{ \mathfrak{A}(y_{n})\omega^\prime(\lambda_n)}=1.
\label{(6.5)}
\end{equation}

\subsection{Case (ii).}

In \cite{aliyev5} it was proved that in case (ii), where  all the eigenvalues of (\ref{(1.1)})--(\ref{(1.3)}) are real, and except one double eigenvalue $\lambda_k$, all are simple, the systems
\begin{equation}
  \{ y_n \} \quad (n = 0,1,\ldots;\; n \neq k+1), \label{(6.6)} 
\end{equation}
\begin{equation}
  \{ y_n \} \quad (n = 0,1,\ldots;\; n \neq l), \label{(6.12)} 
\end{equation}
where $l \ne k,\ k+1$ is a non-negative integer, are both minimal in space $L_2(0,1)$. For (\ref{(6.6)}) we define the biortogonal system for $n \ne k,\ k+1$ by
\begin{equation}
u_n(x)=\frac{y_n(x)-\frac{\mathfrak{A}(y_{n})}{\mathfrak{A}(y_k)}\cdot y_k(x)}{ \mathfrak{A}(y_{n})\omega^\prime(\lambda_n)}
, \label{(6.7)}
\end{equation}
and for $n = k$ by
\begin{equation}
u_k(x)=2\cdot\frac{y_{k+1}(x)-\frac{\mathfrak{A}(y_{k+1})}{\mathfrak{A}(y_k)}\cdot y_k(x)}{ \mathfrak{A}(y_k)\omega^{\prime\prime}(\lambda_k)}
. \label{(6.8)}
\end{equation}
The equality $(u_n, y_m) = \delta_{nm}$ for $ n,m\neq k+1$ can be checked using Lemma 2.1, Lemma 2.3, Corollary 2.4, Lemma 3.1 and Lemma 3.2.

For system (\ref{(6.8)}) the biorthogonal system  $n \ne k,\ k+1$ is defined by (\ref{(6.3)}) and for 
$n = k$ and $n = k+1$ by
\begin{equation}
u_k(x)=2\cdot\frac{y^*_{k+1}(x)-\frac{\mathfrak{A}(y^*_{k+1})}{\mathfrak{A}(y_l)}\cdot y_l(x)}{ \mathfrak{A}(y_k)\omega^{\prime\prime}(\lambda_k)}
, \label{(6.13)}
\end{equation}
\begin{equation}
u_{k+1}(x)=2\cdot\frac{y_k(x)-\frac{\mathfrak{A}(y_{k})}{\mathfrak{A}(y_l)}\cdot y_l(x)}{ \mathfrak{A}(y_{k})\omega^{\prime\prime}(\lambda_k)}
. \label{(6.14)}
\end{equation}

The following theorem gives an explicit form for the necessary and sufficient condition in \cite{aliyev5}.
\begin{thm} 
In case (ii), where  all the eigenvalues of (\ref{(1.1)})--(\ref{(1.3)}) are real, and except one double eigenvalue $\lambda_k$, all are simple, the system
\begin{equation}
  \{ y_n \} \quad (n = 0,1,\ldots;\; n \neq k), \label{(6.9)} 
\end{equation}
is minimal in space $L_2(0,1)$ if and only if $C\neq-\frac{\omega^{\prime\prime\prime}(\lambda_k)}{3\omega^{\prime\prime}(\lambda_k)}$.
\end{thm}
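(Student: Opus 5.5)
The plan is to reduce minimality of (\ref{(6.9)}) to a single nonvanishing condition. The system (\ref{(6.9)}) is minimal exactly when a biorthogonal system exists. We know from (\ref{(6.6)})--(\ref{(6.8)}) that the system with $y_{k+1}$ removed is minimal, with the explicit biorthogonal functions (\ref{(6.7)}) and (\ref{(6.8)}). The key object is
$$
v(x)=y_{k+1}(x)-\frac{\mathfrak{A}(y_{k+1})}{\mathfrak{A}(y_k)}\,y_k(x),
$$
which is a multiple of $u_k$ in (\ref{(6.8)}). By Lemma 2.1 and Lemma 3.1, for every simple $\lambda_n$ we have $(v,y_n)=-(ad-bc)\bigl(\mathfrak{A}(y_{k+1})-\mathfrak{A}(y_{k+1})\bigr)\mathfrak{A}(y_n)=0$. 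Also $\mathfrak{A}(y_k)\neq 0$ by Corollary 2.4, since $\|y_k\|_2\neq 0$. Moreover $v$ does not depend on the choice of $C$, because $\mathfrak{A}$ is linear in the associated function and $\mathfrak{A}(y_{k+1}+Cy_k)=\mathfrak{A}(y_{k+1})+C\mathfrak{A}(y_k)$.

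For necessity, observe that any biorthogonal element $u_{k+1}$ for $y_{k+1}$ in (\ref{(6.9)}) must be orthogonal to all $y_n$ with $n\ne k,k+1$. By completeness and minimality of (\ref{(6.6)}), the orthogonal complement of $\{y_n\}_{n\neq k,k+1}$ is one-dimensional: it is spanned by the biorthogonal element $u_k$, hence by $v$. So $u_{k+1}$ must be a multiple of $v$, and minimality forces $(v,y_{k+1})\neq 0$. For sufficiency, assuming $(v,y_{k+1})\neq0$, I would define
$$
u_{k+1}=\frac{v}{(v,y_{k+1})},\qquad w_n=u_n-(u_n,y_{k+1})\,u_{k+1}\quad (n\neq k,k+1),
$$
with $u_n$ from (\ref{(6.7)}). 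These satisfy $(w_n,y_m)=\delta_{nm}$ and $(u_{k+1},y_m)=\delta_{k+1,m}$ for all $m\neq k$. So the theorem reduces to deciding when $(v,y_{k+1})\ne 0$.

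The main step is computing $(v,y_{k+1})=\|y_{k+1}\|_2^2-\frac{\mathfrak{A}(y_{k+1})}{\mathfrak{A}(y_k)}(y_k,y_{k+1})$. For the first term I use (\ref{(3.24)}) of Lemma 3.3. For the second I use (\ref{(3.16)}) of Lemma 3.2; everything is real in case (ii), so $(y_k,y_{k+1})=(y_{k+1},y_k)$. The terms containing $-(ad-bc)$ cancel, and what remains is
$$
(v,y_{k+1})=\bigl(\mathfrak{A}(\hat y_{k+1})-\mathfrak{A}(y_{k+1})\bigr)\frac{\omega''(\lambda_k)}{2}+\mathfrak{A}(y_k)\frac{\omega'''(\lambda_k)}{6}.
$$
Since $\hat y_{k+1}-y_{k+1}=Cy_k$ and $\mathfrak{A}$ is linear, this equals $\mathfrak{A}(y_k)\bigl(C\frac{\omega''(\lambda_k)}{2}+\frac{\omega'''(\lambda_k)}{6}\bigr)$. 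Because $\lambda_k$ is exactly double, $\omega''(\lambda_k)\neq0$, and we already have $\mathfrak{A}(y_k)\ne0$. Hence $(v,y_{k+1})\neq0$ if and only if $C\neq -\frac{\omega'''(\lambda_k)}{3\omega''(\lambda_k)}$.

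I expect the main obstacle to be bookkeeping rather than ideas. One must track precisely which normalization of the associated function the constant $C$ in the statement refers to (namely $y_{k+1}=\tilde y_{k+1}+Cy_k$, matching $\hat y_{k+1}$ in Lemma 3.3). One must also check that Lemmas 3.2 and 3.3 hold uniformly in both cases $\lambda_k\neq -\frac dc$ and $\lambda_k=-\frac dc$, so that the cancellation above is legitimate in each case.
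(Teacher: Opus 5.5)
Your proof is correct, but it takes a route dual to the paper's.

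\textbf{The paper's route.} It introduces the special associated function $y^*_{k+1}=y_{k+1}+C_1y_k$ of Lemma 5.1 and writes the biorthogonal system (\ref{(6.10)})--(\ref{(6.11)}) with $y^*_{k+1}$ as the pivot. It then expands the boundary values at $x=1$ to show that $\mathfrak{A}(y^*_{k+1})\neq0$ is equivalent to the condition on $C$. For necessity it observes that $\mathfrak{A}(y^*_{k+1})=0$ makes $y^*_{k+1}$ orthogonal to the whole system. It then appeals to the argument of \cite{aliyev4}: a minimal system quadratically close to a basis via (\ref{(1.4)}) would be a basis, hence complete.

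\textbf{Your route.} You pivot instead on the known biorthogonal element $u_k$ of (\ref{(6.6)}), i.e. on $v$. Everything then reduces to the scalar $(v,y_{k+1})$, which you get from Lemmas 3.2 and 3.3 in one line.

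\textbf{Relation between the two.} The two nondegeneracy quantities agree up to a nonzero factor. From (\ref{(5.1)}) one gets $\mathfrak{A}(y^*_{k+1})=-\mathfrak{A}(y_k)\bigl(C+\frac{\omega'''(\lambda_k)}{3\omega''(\lambda_k)}\bigr)$, so $(v,y_{k+1})=-\frac{\omega''(\lambda_k)}{2}\mathfrak{A}(y^*_{k+1})$. In fact your $u_{k+1}=v/(v,y_{k+1})$ and your rank-one corrected $w_n$ coincide exactly with (\ref{(6.11)}) and (\ref{(6.10)}).

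\textbf{What each buys.} Your version avoids Lemma 5.1 and the formula for $C_1$ altogether. It only uses $\mathfrak{A}(y_{k+1}+Cy_k)=\mathfrak{A}(y_{k+1})+C\mathfrak{A}(y_k)$, which holds in both branches of (\ref{(3.7)}). So it covers $\lambda_k\neq-\frac{d}{c}$ and $\lambda_k=-\frac{d}{c}$ at once, whereas the paper works out only the first case explicitly. The paper's version, in turn, delivers the biorthogonal system directly in closed form through one special associated function, which is its stated aim.

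\textbf{One point to make explicit.} Your necessity step needs completeness of (\ref{(6.6)}), since that is what makes the orthogonal complement of $\{y_n\}_{n\neq k,k+1}$ one-dimensional. The paper only records minimality of (\ref{(6.6)}). You must import completeness from one of two sources: the basis result of \cite{aliyev5}, or the same Bari-type fact the paper uses (a minimal system quadratically close to an orthonormal basis is a Riesz basis). With that citation, your argument is complete.
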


\begin{proof} For $n \ne k,\ k+1$ the biorthogonal system is consisted of
\begin{equation}
u_n(x)=\frac{y_n(x)-\frac{\mathfrak{A}(y_{n})}{\mathfrak{A}(y_{k+1}^{*})}\cdot y^*_{k+1}(x)}{ \mathfrak{A}(y_{n})\omega^\prime(\lambda_n)}
, \label{(6.10)}
\end{equation}
and for $n = k+1$,
\begin{equation}
u_{k+1}(x)=2\cdot\frac{y_{k}(x)-\frac{\mathfrak{A}(y_{k})}{\mathfrak{A}(y_{k+1}^{*})}\cdot y^*_{k+1}(x)}{ \mathfrak{A}(y_k)\omega^{\prime\prime}(\lambda_k)}
. \label{(6.11)}
\end{equation}
The equality $(u_n, y_m) = \delta_{nm}$ for $ n,m\neq k$ can be checked using again Lemma 2.1, Lemma 2.3, Lemma 3.1, Lemma 3.2, and Lemma 5.1.
If $\mathfrak{A}(y_{k+1}^{*}) = 0$, then by Lemma 3.1 and Lemma 5.1, $y_{k+1}^{*}$ is orthogonal to 
all the functions in (\ref{(6.9)}). Therefore, (\ref{(6.9)}) is not a complete system in space $L_{2}(0,1)$. Also, (\ref{(6.9)}) is not a minimal system in space $L_{2}(0,1)$, because otherwise as in \cite{aliyev4} we could use its minimality, combine with the quadratic closeness through asymptotic formula (\ref{(1.4)}), and prove that (\ref{(6.9)}) is a basis in space $L_{2}(0,1)$, which contradicts with the above fact that  (\ref{(6.9)}) is not a minimal system in space $L_{2}(0,1)$.
Note that condition $C\neq-\frac{\omega^{\prime\prime\prime}(\lambda_k)}{3\omega^{\prime\prime}(\lambda_k)}$ is equivalent to $\mathfrak{A}(y_{k+1}^{*})\ne 0$. Indeed, if $\lambda_k\neq-\frac{d}{c}$, then
\begin{equation}
 \mathfrak{A}(y^{*}_{k+1}) =
\frac{y^{*}_{k + 1}(1)}{c\lambda_{k}+d} - \frac{cy_{k}(1)}{{(c\lambda_{k}+d)}^{2}}\neq0, 
 \label{(6.11.1)}
\end{equation}
which is simplified using $y_{k+1}^* = y_{k+1} + C_1 y_k$, to
$$({c\lambda_{k}+d})({ y_{1}(k+1) + C_1 y_k(1)})\neq cy_k(1).$$
We use formula (\ref{(5.1)}) for $C_1$, \(
\hat{y}_{k+1} = y_{k+1} + Cy_k,
\) and obtain
$$
({c\lambda_{k}+d})\left( y_{k+1}(1) -\frac{\mathfrak{A}(\hat{y}_{k+1})\frac{\omega^{\prime\prime}(\lambda_k)}{2}+\mathfrak{A}(y_k)\frac{\omega^{\prime\prime\prime}(\lambda_k)}{6}}{\mathfrak{A}(y_k)\frac{\omega^{\prime\prime}(\lambda_k)}{2}}\cdot y_k(1)\right)\neq cy_k(1),$$
$$
({c\lambda_{k}+d})\left( y_{k+1}(1) -\frac{\frac{\hat{y}_{k+1}(1)}{c\lambda_{k}+d} - \frac{cy_{k}(1)}{{(c\lambda_{k}+d)}^{2}}+\frac{{y}_{k}(1)}{c\lambda_{k}+d}\frac{\omega^{\prime\prime\prime}(\lambda_k)}{3\omega^{\prime\prime}(\lambda_k)})}{\frac{{y}_{k}(1)}{c\lambda_{k}+d}}\cdot y_k(1)\right)\neq cy_k(1),$$
$$
({c\lambda_{k}+d})\left( y_{k+1}(1) -(y_{k+1}(1)+Cy_{k}(1)-\frac{cy_{k}(1)}{{(c\lambda_{k}+d)}}+y_{k}(1)\cdot\frac{\omega^{\prime\prime\prime}(\lambda_k)}{3\omega^{\prime\prime}(\lambda_k)}\right)\neq cy_k(1).
$$
By simplifying further we obtain
$C\neq-\frac{\omega^{\prime\prime\prime}(\lambda_k)}{3\omega^{\prime\prime}(\lambda_k)}$.
If $\lambda_k=-\frac{d}{c}$, then the proof is similar.
\end{proof}

\subsection{Case (iii).}

In \cite{aliyev5} it was proved that in case (iii), where  all the eigenvalues of (\ref{(1.1)})--(\ref{(1.3)}) are real, and except one triple eigenvalue $\lambda_k$, all are simple, the systems
\begin{equation}
  \{ y_n \} \quad (n = 0,1,\ldots;\; n \neq k+2), \label{(6.15)} 
\end{equation}
\begin{equation}
  \left\{ y_{n}\right\}  \ (n=0,\ 1,\ldots ; n \ne l),\label{(6.29)}  
\end{equation}
where $l \ne k,\ k+1,\ k+2$ is a non-negative integer,
are minimal in space $L_2(0,1)$.

For (\ref{(6.15)}) and $n \ne k,\ k+1,\ k+2$ the biorthogonal system is defined by (\ref{(6.7)}) and for 
$n = k$ and $n = k+1$ by
\begin{equation}
u_k(x)=6\cdot\frac{y^{\#}_{k+2}(x)-\frac{\mathfrak{A}(y^{\#}_{k+2})}{\mathfrak{A}(y_k)}\cdot y_k(x)}{ \mathfrak{A}(y_k)\omega^{\prime\prime\prime}(\lambda_k)}
, \label{(6.16)}
\end{equation}
\begin{equation}
u_{k+1}(x)=6\cdot\frac{y_{k+1}(x)-\frac{\mathfrak{A}(y_{k+1})}{\mathfrak{A}(y_k)}\cdot y_k(x)}{ \mathfrak{A}(y_{k})\omega^{\prime\prime\prime}(\lambda_k)}
, \label{(6.17)}
\end{equation}
respectively. The equality $(u_n, y_m) = \delta_{nm}$ for $ n,m\neq k+2$ can be verified again using Lemma 2.1, Lemma 2.3, Lemma 3.1, Lemma 3.2, Lemma 3.3 and Lemma 5.3.

For (\ref{(6.29)}) and $n \ne k,\ k+1,\ k+2$ the biorthogonal system is defined by (\ref{(6.3)}) and for 
$n = k$, $n = k+1$, and $n = k+2$ by
\begin{equation}
u_k(x)=6\cdot\frac{y^{\#}_{k+2}(x)-\frac{\mathfrak{A}(y^{\#}_{k+2})}{\mathfrak{A}(y_l)}\cdot y_l(x)}{ \mathfrak{A}(y_k)\omega^{\prime\prime\prime}(\lambda_k)}
, \label{(6.30)}
\end{equation}
\begin{equation}
u_{k+1}(x)=6\cdot\frac{y^{\#}_{k+1}(x)-\frac{\mathfrak{A}(y^{\#}_{k+1})}{\mathfrak{A}(y_l)}\cdot y_l(x)}{ \mathfrak{A}(y_k)\omega^{\prime\prime\prime}(\lambda_k)}
, \label{(6.31)}
\end{equation}
\begin{equation}
u_{k+2}(x)=6\cdot\frac{y_{k}(x)-\frac{\mathfrak{A}(y_{k})}{\mathfrak{A}(y_l)}\cdot y_l(x)}{ \mathfrak{A}(y_k)\omega^{\prime\prime\prime}(\lambda_k)}
, \label{(6.32)}
\end{equation}
respectively.

\begin{thm} 
In case (iii), where  all the eigenvalues of (\ref{(1.1)})--(\ref{(1.3)}) are real, and except one triple eigenvalue $\lambda_k$, all are simple, the system
\begin{equation}
  \{ y_n \} \quad (n = 0,1,\ldots;\; n \neq k+1), \label{(6.19)} 
\end{equation}
is minimal in space $L_2(0,1)$ if and only if $C\neq-\frac{\omega^{IV}(\lambda_k)}{4\omega^{\prime\prime\prime}(\lambda_k)}$.
\end{thm}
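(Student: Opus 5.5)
I will follow the pattern of Theorem 6.1 exactly, with the triple eigenvalue and the special function $y^{\#}_{k+1}$ of Lemma 5.2 replacing the double eigenvalue and $y^*_{k+1}$.

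First I will exhibit the biorthogonal system for (\ref{(6.19)}), using $y^{\#}_{k+1}$ as the "pivot" function. For $n\ne k,k+1,k+2$ I take
\[
u_n(x)=\frac{y_n(x)-\frac{\mathfrak{A}(y_{n})}{\mathfrak{A}(y^{\#}_{k+1})}\, y^{\#}_{k+1}(x)}{\mathfrak{A}(y_{n})\omega^\prime(\lambda_n)}.
\]
For the two indices $n=k$ and $n=k+2$ I take
\[
u_k=6\,\frac{y^{\#}_{k+2}-\frac{\mathfrak{A}(y^{\#}_{k+2})}{\mathfrak{A}(y^{\#}_{k+1})}\, y^{\#}_{k+1}}{\mathfrak{A}(y_k)\omega^{\prime\prime\prime}(\lambda_k)},\qquad
u_{k+2}=6\,\frac{y_{k}-\frac{\mathfrak{A}(y_{k})}{\mathfrak{A}(y^{\#}_{k+1})}\, y^{\#}_{k+1}}{\mathfrak{A}(y_k)\omega^{\prime\prime\prime}(\lambda_k)}.
\]
The relations $(u_n,y_m)=\delta_{nm}$ for $n,m\ne k+1$ are then checked from the following results:
\begin{itemize}
\item Lemmas 2.1 and 2.3 for pairs of simple eigenvalues;
\item Lemmas 3.1 and 4.1 for $y_{k+1}^{\#}$ and $y_{k+2}$ against simple $y_m$;
\item Corollary 2.4 and (\ref{(4.10)}) for $(y_k,y_k)$ and $(y_{k+2},y_k)$;
\item Lemma 5.3 with (\ref{(5.11)}) for $(y^{\#}_{k+2},y_{k+2})$;
\item Lemma 5.2, i.e. (\ref{(5.4)}), for $(y^{\#}_{k+1},y_{k+2})$;
\item (\ref{(3.16)}) for $(y^{\#}_{k+1},y_k)$.
\end{itemize}
The key point is that in each pairing the term $-(ad-bc)\mathfrak{A}\mathfrak{A}$ cancels against the subtracted pivot term. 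What survives is either $0$ or exactly $\mathfrak{A}(y_k)\omega'''(\lambda_k)/6$, which the factor $6/(\mathfrak{A}(y_k)\omega'''(\lambda_k))$ normalizes to $1$. This requires $\mathfrak{A}(y^{\#}_{k+1})\ne0$, so sufficiency follows.

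For necessity, suppose $\mathfrak{A}(y^{\#}_{k+1})=0$. By (\ref{(3.8)}), (\ref{(3.16)}) and (\ref{(5.4)}), the function $y^{\#}_{k+1}$ is orthogonal to every $y_n$ with $n\ne k+1$, so (\ref{(6.19)}) is incomplete. Now suppose (\ref{(6.19)}) were minimal. Then, as in \cite{aliyev4}, minimality together with quadratic closeness from (\ref{(1.4)}) would make it a basis, contradicting incompleteness. Hence minimality is equivalent to $\mathfrak{A}(y^{\#}_{k+1})\ne0$.

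The remaining step is the algebraic translation, which I expect to be the main bookkeeping obstacle; it is not conceptually hard. Take $\lambda_k\ne -d/c$ and write $y^{\#}_{k+1}=y_{k+1}+C_2y_k$ with $C_2$ from (\ref{(5.3)}) and $\hat y_{k+1}=y_{k+1}+Cy_k$. Then $\mathfrak{A}(y_k)=y_k(1)/(c\lambda_k+d)$ and
\[
C_2=-\frac{\mathfrak{A}(\hat y_{k+1})}{\mathfrak{A}(y_k)}-\frac{\omega^{IV}(\lambda_k)}{4\omega'''(\lambda_k)}.
\]
Substituting into $\mathfrak{A}(y^{\#}_{k+1})=\frac{y_{k+1}(1)+C_2y_k(1)}{c\lambda_k+d}-\frac{cy_k(1)}{(c\lambda_k+d)^2}$, the $y_{k+1}(1)$ and $cy_k(1)/(c\lambda_k+d)^2$ terms cancel, leaving
\[
\mathfrak{A}(y^{\#}_{k+1})=-\mathfrak{A}(y_k)\Bigl(C+\frac{\omega^{IV}(\lambda_k)}{4\omega'''(\lambda_k)}\Bigr).
\]
Since $\mathfrak{A}(y_k)\ne0$ (otherwise $\|y_k\|=0$ by Corollary 2.4), this is nonzero iff $C\ne-\omega^{IV}(\lambda_k)/(4\omega'''(\lambda_k))$. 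The case $\lambda_k=-d/c$ is handled identically with $y'(1)$ and $a\lambda_k+b$.
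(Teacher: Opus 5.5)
Your proposal is correct and is essentially the paper's proof. It uses the same biorthogonal system built on $y^{\#}_{k+1}$, the same incompleteness-plus-quadratic-closeness argument when $\mathfrak{A}(y^{\#}_{k+1})=0$, and the same algebraic reduction, which you state more cleanly as $\mathfrak{A}(y^{\#}_{k+1})=-\mathfrak{A}(y_k)\bigl(C+\omega^{IV}(\lambda_k)/(4\omega'''(\lambda_k))\bigr)$ (a minor bookkeeping point: the pairings involving $u_k$ use (4.9)--(4.10) for $y^{\#}_{k+2}$ rather than $y_{k+2}$, and (5.11) is not needed since $y_{k+1}$ is removed).
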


\begin{proof}
 For $n \ne k,\ k+1,\ k+2$ the biorthogonal system is consisted of
\begin{equation}
u_n(x)=\frac{y_n(x)-\frac{\mathfrak{A}(y_{n})}{\mathfrak{A}(y_{k+1}^{\#})}\cdot y^{\#}_{k+1}(x)}{ \mathfrak{A}(y_{n})\omega^\prime(\lambda_n)}
, \label{(6.20)}
\end{equation}
and for $n = k$ and $n = k+2$,
\begin{equation}
u_k(x)=6\cdot\frac{y^{\#}_{k+2}(x)-\frac{\mathfrak{A}(y^{\#}_{k+2})}{\mathfrak{A}(y^{\#}_{k+1})}\cdot y^{\#}_{k+1}(x)}{ \mathfrak{A}(y_k)\omega^{\prime\prime\prime}(\lambda_k)}
, \label{(6.21)}
\end{equation}
\begin{equation}
u_{k+2}(x)=6\cdot\frac{y_{k}(x)-\frac{\mathfrak{A}(y_{k})}{\mathfrak{A}(y^{\#}_{k+1})}\cdot y^{\#}_{k+1}(x)}{ \mathfrak{A}(y_{k})\omega^{\prime\prime\prime}(\lambda_k)}
, \label{(6.22)}
\end{equation}
respectively.
The equality $(u_n, y_m) = \delta_{nm}$ for $ n,m\neq k+2$ can be verified again using Lemma 5.2 and Lemma 5.3.
If
\(
\mathfrak{A}\bigl(y_{k+1}^{\#}\bigr)=0,
\)
then $y_{k+1}^{\#}(x)$ is orthogonal to 
all the functions in (\ref{(6.19)}). Therefore, (\ref{(6.19)}) is neither complete nor minimal in $L_2(0,1)$.
Again, note that condition $C\neq-\frac{\omega^{IV}(\lambda_k)}{4\omega^{\prime\prime\prime}(\lambda_k)}$ is equivalent to $\mathfrak{A}(y_{k+1}^{\#})\ne 0$. If $\lambda_k\neq-\frac{d}{c}$, then
\begin{equation}
 \mathfrak{A}(y^{\#}_{k+1}) =
\frac{y^{\#}_{k + 1}(1)}{c\lambda_{k}+d} - \frac{cy_{k}(1)}{{(c\lambda_{k}+d)}^{2}}\neq0
 \label{(6.20.1)}
\end{equation}
is simplified using $y_{k+1}^\# = y_{k+1} + C_2 y_k$, $\hat{y}_{k+1}={y}_{k+1}+Cy_k$,
$\mathfrak{A}(\hat{y}_{k+1}) =
\frac{\hat{y}_{k + 1}(1)}{c\lambda_{k}+d} - \frac{cy_{k}(1)}{{(c\lambda_{k}+d)}^{2}}$, 
and (\ref{(5.3)}), to obtain
$$
\frac{{y}_{k+1}(1)}{y_{k}(1)}-\left(\frac{\mathfrak{A}(\hat{y}_{k+1})}{\mathfrak{A}({y}_{k})}+\frac{\omega^{IV}(\lambda_k)}{4\omega^{\prime\prime\prime}(\lambda_k)}\right)\neq \frac{c}{c\lambda_{k}+d}, $$

$$
\frac{{y}_{k+1}(1)}{y_{0}(1)}- \left(\frac{\frac{\hat{y}_{k+1}(1)}{c\lambda_{k}+d} - \frac{cy_{k}(1)}{{(c\lambda_{k}+d)}^{2}}}{\frac{y_{k}(1)}{{(c\lambda_{k}+d)}}}+\frac{\omega^{IV}(\lambda_k)}{4\omega^{\prime\prime\prime}(\lambda_k)}\right)\neq\frac{c}{c\lambda_{k}+d} ,
$$
$$
\frac{{y}_{k+1}(1)}{y_{k}(1)}-\frac{{y}_{k+1}(1)}{y_{k}(1)}+C+ \frac{c}{c\lambda_{k}+d}+\frac{\omega^{IV}(\lambda_k)}{4\omega^{\prime\prime\prime}(\lambda_k)}\neq\frac{c}{c\lambda_{k}+d}.
$$
By simplifying further, we obtain
$C\neq-\frac{\omega^{IV}(\lambda_k)}{4\omega^{\prime\prime\prime}(\lambda_k)}$.
The case $\lambda_k=-\frac{d}{c}$ is considered analogously.
\end{proof}
\begin{thm}
In case (iii), where  all the eigenvalues of (\ref{(1.1)})--(\ref{(1.3)}) are real, and except one triple eigenvalue $\lambda_k$, all are simple, the system
\begin{equation}
  \{ y_n \} \quad (n = 0,1,\ldots;\; n \neq k), \label{(6.24)} 
\end{equation}
is minimal in space $L_2(0,1)$ if and only if
\begin{equation}
 D\neq C^2+
\frac{\omega^{IV}(\lambda_k)}{4\omega^{\prime\prime\prime}(\lambda_k)}\left(C+\frac{\omega^{IV}(\lambda_k)}{4\omega^{\prime\prime\prime}(\lambda_k)}\right)-\frac{\omega^{V}(\lambda_k)}{20\omega^{\prime\prime\prime}(\lambda_k)}. \label{(6.25)}  
\end{equation}
\end{thm}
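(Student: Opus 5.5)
Following the pattern of Theorems 6.1 and 6.2, I will first build a candidate biorthogonal system for (\ref{(6.24)}), with $y^{\#}_{k+2}$ playing the role of the distinguished function. For $n\ne k,k+1,k+2$ set
\[
u_n=\frac{y_n-\frac{\mathfrak{A}(y_n)}{\mathfrak{A}(y^{\#}_{k+2})}y^{\#}_{k+2}}{\mathfrak{A}(y_n)\omega'(\lambda_n)} .
\]
For $n=k+1,k+2$ I will take
\[
u_{k+1}=6\cdot\frac{y^{\#}_{k+1}-\frac{\mathfrak{A}(y^{\#}_{k+1})}{\mathfrak{A}(y^{\#}_{k+2})}y^{\#}_{k+2}}{\mathfrak{A}(y_k)\omega'''(\lambda_k)},\qquad
u_{k+2}=6\cdot\frac{y_{k}-\frac{\mathfrak{A}(y_{k})}{\mathfrak{A}(y^{\#}_{k+2})}y^{\#}_{k+2}}{\mathfrak{A}(y_k)\omega'''(\lambda_k)} .
\]

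The relations $(u_n,y_m)=\delta_{nm}$ for $n,m\ne k$ then follow directly from earlier results. Products with simple eigenfunctions are handled by Lemma 2.1, Lemma 2.3, Lemma 3.1 and Lemma 4.1. The relation (4.9) holds for $y^{\#}_{k+2}$. For the block $\{y_{k+1},y_{k+2}\}$ I will use (\ref{(5.10)}), (\ref{(5.11)}), (\ref{(5.4)}), (\ref{(5.5)}), Lemma 3.2 and Lemma 4.2. The point is that every inner product among these functions has the form $-(ad-bc)\mathfrak{A}(\cdot)\mathfrak{A}(\cdot)$, except for the single extra term $\mathfrak{A}(y_k)\omega'''(\lambda_k)/6$. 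That term appears only in the pairings $(y_k,y_{k+2})$ and $(y^{\#}_{k+1},y_{k+1})$, and it produces the diagonal $1$'s. Hence, whenever $\mathfrak{A}(y^{\#}_{k+2})\neq0$, the system is minimal.

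Conversely, suppose $\mathfrak{A}(y^{\#}_{k+2})=0$. Then by (4.9), (\ref{(5.11)}) and (\ref{(5.10)}), $y^{\#}_{k+2}$ is orthogonal to every function in (\ref{(6.24)}), so the system is not complete. Non-minimality then follows by the same argument as in Theorem 6.1, as in \cite{aliyev4}: minimality together with quadratic closeness from (\ref{(1.4)}) would give a basis, contradicting incompleteness. So the theorem reduces to showing that $\mathfrak{A}(y^{\#}_{k+2})\ne0$ is equivalent to (\ref{(6.25)}).

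The main obstacle is this algebraic translation. Take the case $\lambda_k\ne-d/c$, and write $y^{\#}_{k+2}=y_{k+2}+C_2y_{k+1}+D_1y_k$ and $y^{\#}_{k+1}=y_{k+1}+C_2y_k$. Substituting into (\ref{(5.5.3)}) gives
\[
\mathfrak{A}(y^{\#}_{k+2})=\mathfrak{A}(y_{k+2})+C_2\mathfrak{A}(y_{k+1})+D_1\mathfrak{A}(y_k).
\]
Here $\mathfrak{A}(y_{k+1})$ contributes the term $-cy_k(1)/(c\lambda_k+d)^2$, and the shift $C_2y_k$ inside $y^{\#}_{k+1}$ is accounted for in the second term of $\mathfrak{A}(y^{\#}_{k+2})$. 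I will then insert $C_2$ from (\ref{(5.3)}), $D_1$ from (\ref{(5.9)}) and $J_k$ from (\ref{(5.8)}), together with $\hat y_{k+1}=y_{k+1}+Cy_k$ and $\hat y_{k+2}=y_{k+2}+Cy_{k+1}+Dy_k$.

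It helps to normalize everything by $\mathfrak{A}(y_k)=y_k(1)/(c\lambda_k+d)$. Writing $\rho=\omega^{IV}/(4\omega''')$, the computation in Theorem 6.2 gives
\[
C_2=-\frac{\mathfrak{A}(y_{k+1})}{\mathfrak{A}(y_k)}-C-\rho .
\]
The quantities $\mathfrak{A}(\hat y_{k+2})/\mathfrak{A}(y_k)$ and $\mathfrak{A}(\hat y_{k+1})/\mathfrak{A}(y_k)$ are affine in $D$ and $C$. After substitution, all terms involving $y_{k+2}(1)$, $y_{k+1}(1)$ and the $c/(c\lambda_k+d)$ corrections should cancel, just as in the two earlier theorems, leaving an expression in $C$, $D$, $\rho$ and $\omega^{V}/(20\omega''')$. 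Setting it equal to zero should yield $D=C^2+\rho(C+\rho)-\omega^{V}/(20\omega''')$. The coefficient $6/120=1/20$ comes from dividing $\omega^{V}/120$ by $\omega'''/6$.

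I expect the bookkeeping of the $C_2^2$ term in $J_k$, against the cross terms, to be the delicate step, and I would verify the cancellation carefully. The case $\lambda_k=-d/c$ is handled identically, using the second branches of the $\mathfrak{A}$ definitions.
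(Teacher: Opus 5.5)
Your proposal follows the paper's proof essentially step for step: the same biorthogonal system built on $y^{\#}_{k+2}$, the same incompleteness and quadratic-closeness argument when $\mathfrak{A}(y^{\#}_{k+2})=0$, and the same algebraic reduction. That reduction does close as you expect: with $a_1=\mathfrak{A}(y_{k+1})/\mathfrak{A}(y_k)$ and $C_2=-a_1-C-\rho$ one gets $\mathfrak{A}(y^{\#}_{k+2})/\mathfrak{A}(y_k)=C_2^2+(C_2-C-\rho)a_1-D-C\rho-\frac{\omega^{V}(\lambda_k)}{20\omega'''(\lambda_k)}=(C+\rho)^2-C\rho-D-\frac{\omega^{V}(\lambda_k)}{20\omega'''(\lambda_k)}$.

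Your use of $y^{\#}_{k+1}$ in $u_{k+1}$ is in fact the right choice. With $y_{k+1}$, as printed in (\ref{(6.27)}), Lemma 4.3, (\ref{(5.3)}) and (\ref{(5.10)}) give $(u_{k+1},y_{k+2})=-C_2$, which is nonzero in general.
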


\begin{proof} For $n \ne k,\ k+1,\ k+2$ the biorthogonal system is consisted of
\begin{equation}
u_n(x)=\frac{y_n(x)-\frac{\mathfrak{A}(y_{n})}{\mathfrak{A}(y_{k+2}^{\#})}\cdot y^{\#}_{k+2}(x)}{ \mathfrak{A}(y_{n})\omega^\prime(\lambda_n)}
, \label{(6.26)}
\end{equation}
and for $n = k+1$ and $n = k+2$,
\begin{equation}
u_{k+1}(x)=6\cdot\frac{y_{k+1}(x)-\frac{\mathfrak{A}(y_{k+1})}{\mathfrak{A}(y^{\#}_{k+2})}\cdot y^{\#}_{k+2}(x)}{ \mathfrak{A}(y_{k})\omega^{\prime\prime\prime}(\lambda_k)}
, \label{(6.27)}
\end{equation}
\begin{equation}
u_{k+2}(x)=6\cdot\frac{y_{k}(x)-\frac{\mathfrak{A}(y_{k})}{\mathfrak{A}(y^{\#}_{k+2})}\cdot y^{\#}_{k+2}(x)}{ \mathfrak{A}(y_{k})\omega^{\prime\prime\prime}(\lambda_k)}
, \label{(6.28)}
\end{equation}
respectively. One can check that $\mathfrak{A}(y_{k+2}^{\#})\neq 0$ is equivalent to (\ref{(6.25)}).
Indeed, if $\lambda_k\neq-\frac{d}{c}$, then
\begin{equation}
 \mathfrak{A}(y^{\#}_{k+2}) =
\frac{y^{\#}_{k + 2}(1)}{c\lambda_{k} + d} - \frac{cy^{\#}_{k+1}(1)}{\left( c\lambda_{k} + d \right)^{2}}
+\frac{c^2y_{k}(1)}{\left( c\lambda_{k} + d \right)^{3}}\neq0
 \label{(6.28.1.1)}
\end{equation}
is simplified using 
\(
y_{k+1}^\# = y_{k+1} + C_2 y_k,
\)
$\hat{y}_{k+1}={y}_{k+1}+Cy_k$,
$\hat{y}_{k+2}={y}_{k+2}+Cy_{k+1}+Dy_k$,
\(y^{*}_{k+2}=y_{k+2}+C_{2}y_{k+1}\),
$y^{\#}_{k+2}=y^{*}_{k+2}+D_{1}y_{k}$,
(\ref{(5.3)}), (\ref{(5.8)}), and (\ref{(5.9)}), to
$$
\frac{y_{k+2}(1) + C_2 y_{k+1}(1) + D_1 y_k(1)}{y_k(1)} - c\frac{y_{k+1}(1) + C_2 y_k(1)}{(c\lambda_k+d)y_k(1)} + \frac{c^2}{(c\lambda_k+d)^2} \neq 0,
$$

$$
\frac{y_{k+2}(1)}{y_k(1)} + C_2\frac{y_{k+1}(1)}{y_k(1)} - \left( \frac{\mathfrak{A}(\hat{y}_{k+2})}{\mathfrak{A}(y_k)} + \frac{\mathfrak{A}(\hat{y}_{k+1})}{\mathfrak{A}(y_k)}\frac{\omega^{IV}(\lambda_k)}{4\omega^{\prime\prime\prime}(\lambda_k)} + \frac{\omega^{V}(\lambda_k)}{20\omega^{\prime\prime\prime}(\lambda_k)} - C_2^2 \right)$$
$$- c\frac{y_{k+1}(1) + C_2 y_k(1)}{(c\lambda_k+d)y_k(1)} + \frac{c^2}{(c\lambda_k+d)^2} \neq 0,
$$

$$
\frac{y_{k+2}(1)}{y_k(1)} + C_2\left(\frac{y_{k+1}(1)}{y_k(1)} - \frac{c}{c\lambda_k+d}\right) + C_2^2 $$
$$- \left( \frac{\frac{\hat{y}_{k+2}(1)}{c\lambda_k+d} - \frac{c\hat{y}_{k+1}(1)}{(c\lambda_k+d)^2} + \frac{c^2y_k(1)}{(c\lambda_k+d)^3}}{\frac{y_k(1)}{c\lambda_k+d}} + \frac{\mathfrak{A}(\hat{y}_{k+1})}{\mathfrak{A}(y_k)}\frac{\omega^{IV}(\lambda_k)}{4\omega^{\prime\prime\prime}(\lambda_k)} + \frac{\omega^{V}(\lambda_k)}{20\omega^{\prime\prime\prime}(\lambda_k)} \right) $$
$$- c\frac{y_{k+1}(1)}{(c\lambda_k+d)y_k(1)} + \frac{c^2}{(c\lambda_k+d)^2} \neq 0,
$$
$$
C_2\left(\frac{y_{k+1}(1)}{y_k(1)} - \frac{c}{c\lambda_k+d}\right) + C_2^2 - C\frac{y_{k+1}(1)}{y_k(1)} - D + \frac{cC}{c\lambda_k+d} - \frac{\mathfrak{A}(\hat{y}_{k+1})}{\mathfrak{A}(y_k)}\frac{\omega^{IV}(\lambda_k)}{4\omega^{\prime\prime\prime}(\lambda_k)} - \frac{\omega^{V}(\lambda_k)}{20\omega^{\prime\prime\prime}(\lambda_k)} \neq 0.
$$
Substituting $C_2 = -\frac{\mathfrak{A}(\hat{y}_{k+1})}{\mathfrak{A}(y_k)} - \frac{\omega^{IV}(\lambda_k)}{4\omega^{\prime\prime\prime}(\lambda_k)}$ and utilizing the relation $\frac{\mathfrak{A}(\hat{y}_{k+1})}{\mathfrak{A}(y_k)} = \frac{y_{k+1}(1)}{y_k(1)} + C - \frac{c}{c\lambda_k+d}$, we obtain:

$$
-\left(\frac{y_{k+1}(1)}{y_k(1)} + C - \frac{c}{c\lambda_k+d} + \frac{\omega^{IV}(\lambda_k)}{4\omega^{\prime\prime\prime}(\lambda_k)}\right)\left(\frac{y_{k+1}(1)}{y_k(1)} - \frac{c}{c\lambda_k+d}\right)$$
$$+ \left(\frac{y_{k+1}(1)}{y_k(1)} + C - \frac{c}{c\lambda_k+d} + \frac{\omega^{IV}(\lambda_k)}{4\omega^{\prime\prime\prime}(\lambda_k)}\right)^2 
$$
$$
- C\frac{y_{k+1}(1)}{y_k(1)} - D + \frac{cC}{c\lambda_k+d} - \left(\frac{y_{k+1}(1)}{y_k(1)} + C - \frac{c}{c\lambda_k+d}\right)\frac{\omega^{IV}(\lambda_k)}{4\omega^{\prime\prime\prime}(\lambda_k)} - \frac{\omega^{V}(\lambda_k)}{20\omega^{\prime\prime\prime}(\lambda_k)} \neq 0,
$$
$$
\left(\frac{y_{k+1}(1)}{y_k(1)} - \frac{c}{c\lambda_k+d}\right)\left(C + \frac{\omega^{IV}(\lambda_k)}{4\omega^{\prime\prime\prime}(\lambda_k)}\right) + C\left(C + \frac{\omega^{IV}(\lambda_k)}{4\omega^{\prime\prime\prime}(\lambda_k)}\right) + \frac{\omega^{IV}(\lambda_k)}{4\omega^{\prime\prime\prime}(\lambda_k)}\left(C + \frac{\omega^{IV}(\lambda_k)}{4\omega^{\prime\prime\prime}(\lambda_k)}\right) 
$$
$$
- C\left(\frac{y_{k+1}(1)}{y_k(1)} - \frac{c}{c\lambda_k+d}\right) - D - \left(\frac{y_{k+1}(1)}{y_k(1)} - \frac{c}{c\lambda_k+d}\right)\frac{\omega^{IV}(\lambda_k)}{4\omega^{\prime\prime\prime}(\lambda_k)} - C\frac{\omega^{IV}(\lambda_k)}{4\omega^{\prime\prime\prime}(\lambda_k)} - \frac{\omega^{V}(\lambda_k)}{20\omega^{\prime\prime\prime}(\lambda_k)} \neq 0.
$$
By simplifying further, we obtain:
$$
C^2 + \frac{\omega^{IV}(\lambda_k)}{4\omega^{\prime\prime\prime}(\lambda_k)}\left(C + \frac{\omega^{IV}(\lambda_k)}{4\omega^{\prime\prime\prime}(\lambda_k)}\right) - \frac{\omega^{V}(\lambda_k)}{20\omega^{\prime\prime\prime}(\lambda_k)} - D \neq 0,
$$
which directly yields to (\ref{(6.25)}).
The case $\lambda_k=-\frac{d}{c}$ is considered analogously. If $\mathfrak{A}(y_{k+2}^{\#})=0$, then the system (\ref{(6.24)}) is not minimal in $L_2(0,1)$.
\end{proof}

\subsection{Case (iv).}

In \cite{aliyev5} it was proved that in case (iv), where  all the eigenvalues of (\ref{(1.1)})--(\ref{(1.3)}) are simple, and except one pair of
complex conjugate non-real eigenvalues $\lambda_r$ and $\lambda_s=\overline{\lambda_r}$, all are real, the systems
\begin{equation}
 \left\{ y_n \right\} \quad (n=0,1,\ldots;\ n\neq r),\label{(6.33)}
\end{equation}
\begin{equation}
\left\{ y_n \right\} \quad (n=0,1,\ldots;\ n\neq l), \label{(6.34)}
\end{equation}
where $l$ is a non-negative integer and $l\neq r,s$, are minimal in space $L_2(0,1)$.

 For (\ref{(6.33)}) and $n \ne r,s$ the biorthogonal system is defined by
\begin{equation}
u_n(x)=\frac{y_n(x)-\frac{\mathfrak{A}(y_{n})}{\mathfrak{A}(y_s)}\cdot y_s(x)}{ \mathfrak{A}(y_{n})\omega^\prime(\lambda_n)}
,\label{(6.35)}
\end{equation}
and for $n =s$ by
\begin{equation}
u_s(x)=\frac{y_r(x)-\frac{\mathfrak{A}(y_{r})}{\mathfrak{A}(y_s)}\cdot y_s(x)}{ \mathfrak{A}(y_{s})\omega^\prime(\lambda_s)}
. \label{(6.36)}
\end{equation}
For (\ref{(6.34)}) and $n \ne r,s,l$ the biorthogonal system is defined by (\ref{(6.3)}),
and for $n =r$ and $n =s$ by
\begin{equation}
u_r(x)=\frac{y_s(x)-\frac{\mathfrak{A}(y_{s})}{\mathfrak{A}(y_l)}\cdot y_l(x)}{ \mathfrak{A}(y_{r})\omega^\prime(\lambda_r)}
, \label{(6.37)}
\end{equation}
\begin{equation}
u_s(x)=\frac{y_r(x)-\frac{\mathfrak{A}(y_{r})}{\mathfrak{A}(y_l)}\cdot y_l(x)}{ \mathfrak{A}(y_{s})\omega^\prime(\lambda_s)}
, \label{(6.38)}
\end{equation}
respectively.

\section{Examples.} As a demonstration of the theory, we will present several worked out examples from \cite{aliyev5} using the methods of the current paper. In all of the examples the results below and the results from \cite{aliyev5} coincide.

\subsection*{Example 1.}Let us take the problem  
$$
-y^{\prime \prime }=\lambda y,\ 0<x<1,
$$
$$
y'(0)=0,\ 
{\lambda}y(1)=\left(\frac{4}{\pi^2}{\lambda }-1\right)y^{\prime }(1).
$$
The solution of problem $-y^{\prime \prime }=\lambda y$ with boundary conditions $y(0)=1$, $y^\prime(0)=0$ is $y({x,\lambda)}=\cos{\sqrt{\lambda}x}$. Therefore, $y^\prime({x,\lambda)}=-{\sqrt{\lambda}}\sin{\sqrt{\lambda}x}$, Consequently, the characteristic function is$$\omega(\lambda) = {\lambda}\cos{\sqrt{\lambda}}+\left(\frac{4}{\pi^2}{\lambda }-1\right){\sqrt{\lambda}}\sin{\sqrt{\lambda}}.
$$
The Taylor expansion of the characteristic function is \[\omega(\lambda) =  \left(\frac{4}{\pi^2} - \frac{1}{3}\right)\lambda^2 + \left(\frac{1}{30} - \frac{2}{3\pi^2}\right)\lambda^3 + O(\lambda^4).\]
So, $\omega(0)=\omega^\prime(0)=0$, $\omega^{\prime\prime}(0)=\frac{-2 \pi^{2}+24}{3 \pi^{2}}$, and $\omega^{\prime\prime\prime}(0)=\frac{\pi^{2}-20}{5 \pi^{2}}$.
Then
$\lambda _{0}=\lambda _{1}=0$ is a double eigenvalue, and $\lambda _{0}\neq-\frac{d}{c}$. Similarly,
$$\omega(\lambda) = \left(\frac{2}{\pi} - \frac{\pi}{4}\right)\left(\lambda - \frac{\pi^2}{4}\right) +  O\left(\left(\lambda - \frac{\pi^2}{4}\right)^2\right).$$
Then the second eigenvalue $\lambda _{2}=\frac{\pi^2}{4}$ is simple and $\lambda _{2}=-\frac{d}{c}$. The remaining eigenvalues $\lambda _{3}<\lambda _{4}<\ldots$ are the other positive zeros  of the characteristic function.

The eigenfunctions are $y_{0}=1$, $y_{2}=\cos{\frac{\pi x}{2}}$, $y_{n}=\cos{\sqrt{\lambda _{n}}x}$ $(n\ge 3)$, and the first associated function of $y_{0}$ is defined as $y_{1}=\lim_{\lambda\to 0}y_\lambda(x,\lambda)+Cy_0=-\frac{1}{2}x^2+C$, where $C$ is
an arbitrary constant.
By Theorem 6.1,
the system $\left\{ y_{1},\ y_{2},\ y_{3},\ldots\right\}$,  with removed $y_{0}$ is minimal in $L_2(0,1)$ if and only if $C\ne-\frac{\omega^{\prime\prime\prime}(\lambda_0)}{3\omega^{\prime\prime}(\lambda_0)}=\frac{\mathrm{\pi}^{2}-20}{10 \left(\mathrm{\pi}^{2}-12\right)}$.

\subsection*{Example 2.}Consider the problem  
$$
-y^{\prime \prime }=\lambda y,\ 0<x<1,
$$
$$
y'(0)=0,\ 
3{\lambda}y(1)=\left( {\lambda }-3\right)y^{\prime }(1).
$$
The solution of problem $-y^{\prime \prime }=\lambda y$ with boundary conditions $y(0)=1$, $y^\prime(0)=0$ is $y({x,\lambda)}=\cos{\sqrt{\lambda}x}$. Therefore, $y^\prime({x,\lambda)}=-\sqrt{\lambda}\sin{\sqrt{\lambda}x}$, and
the characteristic function is \[\omega(\lambda) = 3\lambda\cos{\sqrt{\lambda}}+  (\lambda - 3)\sqrt{\lambda}\sin{\sqrt{\lambda}}=-\frac{\lambda^{3}}{15}+\frac{\lambda^{4}}{210}-\frac{\lambda^{5}}{7560}+O(\lambda^{6}).\]
Consequently, $\omega(0)=\omega^\prime(0)=\omega^{\prime\prime}(0)=0$, $\omega^{\prime\prime\prime}(0)=-\frac{2}{5}$, $\omega^{IV}(0)=\frac{4}{35}$, and $\omega^{V}(0)=-\frac{1}{63}$.
Finally, $\lambda _{0}=\lambda _{1}=\lambda _{2}=0$ is the triple eigenvalue such that $\lambda _{0}\neq-\frac{d}{c}=3$. All other eigenvalues $\lambda _{3}<\lambda _{4}<\ldots$ are real and simple.

The eigenfunctions are $y_{0}=1$ and $y_{n}=\cos{\sqrt{\lambda _{n}}x}$ $(n\ge 3)$.
To find the first associated function of $y_{0}$ we need to calculate the limit
$$\tilde{y}_1=\lim_{\lambda\to 0}y_\lambda(x,\lambda)=-\frac{x^2}{2}.$$
Similarly, for the second associated function
$$\tilde{y}_2=\frac{1}{2}\lim_{\lambda\to 0}y_{\lambda\lambda}(x,\lambda)=\frac{x^4}{24}.$$
Thus, the first and second associated functions are $y_{1}=-\frac{1}{2}x^2+C$ and $y_{2}=\frac{1}{24}x^4-\frac{C}{2}x^2+D$, where $C$ and $D$ are constants. By Theorem 6.2,
the system $\left\{ y_{0},\ y_{2},\ y_{3},\ldots\right\}$, is minimal in $L_2(0,1)$ if and only if $C\neq-\frac{\omega^{IV}(\lambda_k)}{4\omega^{\prime\prime\prime}(\lambda_k)}=\frac{1}{14}$. 
Similarly, according to Theorem 6.3,
the system $\left\{ y_{1},\ y_{2},\ y_{3},\ldots\right\}$ is minimal in $L_2(0,1)$ if and only if (\ref{(6.25)}) is satisfied. By simplifying, we obtain that (\ref{(6.25)}) is equivalent to $D\ne\frac{11}{3528}-\frac{1}{14} C+C^{2}$.

\subsection*{Example 3.}Consider the problem  
$$
-y^{\prime \prime }=\lambda y,\ 0<x<1,
$$
$$
y(0)=-y'(0),\ 
(9{\lambda}+15)y(1)= 5{\lambda }y^{\prime }(1).
$$
The solution of problem $-y^{\prime \prime }=\lambda y$ with boundary conditions $y(0)=1$, $y^\prime(0)=-1$ is $y({x,\lambda)}=\cos{\sqrt{\lambda}x}-\frac{1}{\sqrt{\lambda}}\sin{\sqrt{\lambda}x}$. Therefore, $$y^\prime({x,\lambda)}=-\sqrt{\lambda}\sin{\sqrt{\lambda}x}-\cos{\sqrt{\lambda}x},$$and
the characteristic function is $$\omega(\lambda) = (9{\lambda}+15)\left(\cos{\sqrt{\lambda}}-\frac{\sin{\sqrt{\lambda}}}{{\sqrt{\lambda}}} \right)+5\lambda\left(\sqrt{\lambda}\sin{\sqrt{\lambda}} +\cos{\sqrt{\lambda}}\right)
$$
$$
=-\frac{12\lambda^{3}}{35}+\frac{23\lambda^{4}}{945}-\frac{\lambda^{5}}{1485}+O(\lambda^{6}).
$$
Consequently, $\omega(0)=\omega^\prime(0)=\omega^{\prime\prime}(0)=0$, $\omega^{\prime\prime\prime}(0)=-\frac{72}{35}$, $\omega^{IV}(0)=-\frac{1}{1485}$, and $\omega^{V}(0)=-\frac{8}{99}$.
Finally, $\lambda _{0}=\lambda _{1}=\lambda _{2}=0$ is the triple eigenvalue such that $\lambda _{0}=-\frac{d}{c}=0$. All other eigenvalues $\lambda _{3}<\lambda _{4}<\ldots$ are real and simple.
The eigenfunctions are $y_{0}=1-x$ and $y_{n}=\cos{\sqrt{\lambda _{n}}x}-\frac{\sin{\sqrt{\lambda_n}x}}{{\sqrt{\lambda_n}}}$ $(n\ge 3)$.
To find the first associated function of $y_{0}$ we need to calculate the limit
$$\tilde{y}_1=\lim_{\lambda\to 0}y_\lambda(x,\lambda)=\frac{x^3}{6}-\frac{x^2}{2}.$$
Similarly, for the second associated function
$$\tilde{y}_2=\frac{1}{2}\lim_{\lambda\to 0}y_{\lambda\lambda}(x,\lambda)=-\frac{x^5}{120}+\frac{x^4}{24}.$$
Thus, the first and second associated functions are 
$y_{1}=\frac{x^{3}}{6}-\frac{x^{2}}{2}+C\cdot \left(1-x\right)$, and $y_{2}=-\frac{x^{5}}{120}+\frac{x^{4}}{24}+C\cdot \left(\frac{x^{3}}{6}-\frac{x^{2}}{2}\right)+{D}\cdot \left(1-x\right)$, where $C$ and $D$ are constants. 
By Theorem 6.2,
the system $\left\{ y_{0},\ y_{2},\ y_{3},\ldots\right\}$, is minimal in $L_2(0,1)$ if and only if $C\neq-\frac{\omega^{IV}(\lambda_k)}{4\omega^{\prime\prime\prime}(\lambda_k)}=\frac{23}{324}$. 
Similarly, by Theorem 6.3,
the system $\left\{ y_{1},\ y_{2},\ y_{3},\ldots\right\}$ is minimal in $L_2(0,1)$ if and only if (\ref{(6.25)}) is satisfied. By simplifying we obtain that (\ref{(6.25)}) is equivalent to $D\ne\frac{3551}{1154736}-\frac{23}{3244} C+C^{2}$.

\subsection*{Acknowledgment}
This work was financially supported by ADA University and Baku State University.

\section*{Declarations}

\noindent\textbf{Ethics approval and consent to participate.}
Not applicable.

\noindent\textbf{Ethics declaration} Not applicable.

\noindent\textbf{Ethics, Consent to Participate, and Consent to Publish declarations.} Not applicable.

\noindent\textbf{Consent for publication.}
Not applicable.

\noindent\textbf{Competing interests.}
The authors declare that they have no competing interests.

\noindent\textbf{Authors' contributions.}
Both authors contributed equally to the conception, analysis, and writing of the manuscript. Both authors read and approved the final manuscript.

\noindent\textbf{Funding.}
This work was supported by the ADA University Faculty Research and Development Fund and Baku State University.

\noindent\textbf{Availability of data and materials.}
No datasets were generated or analysed during the current study. Data sharing is not applicable to this article.

\noindent\textbf{Acknowledgements.}
Not applicable.



\begin{thebibliography}{1}
\bibitem{aliyev0}
\textbf{N. B. Kerimov, Y. N. Aliyev}, \textit{The basis property in $L_p$ of the boundary value problem rationally dependent on the eigenparameter}, Studia Mathematica \textbf{174(2)} (2006), 201--212.

\bibitem{aliyev1}
\textbf{Y. N. Aliyev}, \textit{Minimality of the system of root functions of Sturm-Liouville problems with decreasing affine boundary conditions}, Colloquium Mathematicum \textbf{109(1)}
(2007), 147-162.

\bibitem{aliyev2}
\textbf{Y.N. Aliyev,} Minimality Properties of Sturm-Liouville Problems with Increasing Affine Boundary Conditions. In: Bastos M.A., Castro L., Karlovich A.Y. (eds) Operator Theory, Functional Analysis and Applications. Operator Theory: Advances and Applications, Birkhäuser, Cham.  vol 282 (2021) 33-49.

\bibitem{aliyev3}
\textbf{Y. N. Aliyev, N. B. Kerimov}, The Basis Property Of Sturm–Liouville Problems With Boundary Conditions Depending Quadratically On The Eigenparameter, The Arabian Journal For Science And Engineering  \textbf{33(1)} (2008),123-136.

\bibitem{aliyev4}
\textbf{Y. Aliyev,} Necessary and Sufficient Conditions for Basis Properties of the System of Root Functions of Sturm-Liouville Boundary Value Problems with Eigenparameter Dependent Boundary Conditions. In: Ashyralyev, A., Ruzhansky, M., Sadybekov, M.A. (eds)
 Analysis and Applied Mathematics. AAM 2022. Trends in Mathematics, vol 6. Birkhäuser, Cham.  (2024) 21-32.

 \bibitem{aliyev5}
\textbf{Y. Aliyev, N. Aliyeva} Sturm-Liouville problems with a boundary condition depending linearly on an eigenparameter. Preprint (2026) .
\url{https://doi.org/10.48550/arXiv.2603.14817}


\bibitem{zaliyev1}
\textbf{Z.S. Aliyev, E.A. Aghayev}, The basis properties of the system of root functions of Sturm-Liouville problem with spectral parameter in the boundary condition, Proceedings of IMM of NAS of Azerbaijan. \textbf{32} (2010), 63-72.

\bibitem{aslanova}
\textbf{Aslanova, N., Aslanov, K. and Kocinac, L}, On Some Spectral Problems for Sturm–Liouville Equation With Operator Coefficients, Math. Meth. Appl. Sci. \textbf{} (2025). \url{https://doi.org/10.1002/mma.10893}

\bibitem{aslanova1}
\textbf{Aslanova, N., Aslanov, K.}, On the Operator Sturm-Liouville Problem with Unbounded Operator Coefficients in Boundary Condition. Journal of Applied and Computational Mechanics, (2025) \url{https://doi.org/10.22055/jacm.2025.48681.5422}

\bibitem{benedek}
\textbf{A.I. Benedek and R. Panzone}, \textit{On inverse eigenvalue problems for a second-order differential
equation with parameter contained in the boundary conditions}, Notas de Algebra y Analisis, INMABB - CONICET, 
Universidad Nacional Del Sur
Bahia Blanca - Argentina, \textbf{9} (1980), 1–13.

\bibitem{binding1}
\textbf{P. A. Binding and P. J. Browne}, \textit{Application of
two parameter eigencurves to Sturm-Liouville problems with
eigenparameter-dependent boundary conditions}, Proc. Roy. Soc.
Edinburgh \textbf{125A} (1995), 1205-1218.


\bibitem{binding2}
\textbf{P. A. Binding, P. J. Browne and B. A. Watson},
\textit{Equivalence of inverse Sturm-Liouville problems with
boundary conditions rationally dependent on the eigenparameter},
J. Math. Anal. Appl. \textbf{291} (2004), 246-261.

\bibitem{chein}
\textbf{Chein-Shan Liu, Yung-Wei Chen, Chih-Wen Chang},
\textit{Precise eigenvalues in the solutions of generalized Sturm–Liouville problems}, Mathematics and Computers in Simulation, 217, 2024, 354-373.
\url{https://doi.org/10.1016/j.matcom.2023.11.008.}

\bibitem{fulton}
\textbf{Charles T. Fulton}, \textit{Two-point boundary value problems with eigenvalue parameter contained in the boundary conditions},Proceedings of the Royal Society of Edinburgh Section A: Mathematics, \textbf{77(3-4)} (1977), 293-308.

\bibitem{guliyev}
\textbf{Guliyev, N.J.}, \textit{Riesz basis criterion for Schrödinger operators with boundary conditions dependent on the eigenvalue parameter}, Anal.Math.Phys. 10, 2 (2020). \url{https://doi.org/10.1007/s13324-019-00348-0}


\bibitem{kerimov}
\textbf{N. B. Kerimov and Y. N. Aliyev}, \textit{The basis
property in $L_p$ of the boundary value problem rationally
dependent on the eigenparameter}, Studia Math. \textbf{174(2)} (2006),
201-212.

\bibitem{kerimov0}
\textbf{N. B. Kerimov}, \textit{Basis Properties in Lp of a Sturm-Liouville Operator with Spectral Parameter in the Boundary Conditions.},  Diff Equat . \textbf{55(2)} (2019),
149–158.

\bibitem{kerimov1}
\textbf{N. B. Kerimov and V. S. Mirzoev}, \textit{On the basis
properties of one spectral problem with a spectral parameter in
boundary conditions}, Siberian Math. J. \textbf{44} (2003), 813-816.

\bibitem{li}
\textbf{Li, Z., Zheng, Z. Zhang, Y.},
\textit{Properties of Eigenvalues and Generalized Eigenfunctions for Sturm–Liouville Problem with Eigenparameter-Dependent Boundary Conditions.}, Mediterr. J. Math. , 22, 222 (2025). \url{https://doi.org/10.1007/s00009-025-02987-z}

\bibitem{liu}
\textbf{Liu, C.-S.; Chang, C.-W.; Kuo, C.-L.}, \textit{Numerical Analysis for Sturm–Liouville Problems with Nonlocal Generalized Boundary Conditions.},Mathematics 2024, 12, 1265. \url{https://doi.org/10.3390/math12081265}

\bibitem{mois}
\textbf{E. I. Moiseev and N. Yu. Kapustin}, \textit{On the singularities of the root space of a spectral problem
with a spectral parameter in the boundary condition}, Dokl. Akad.
Nauk, \textbf{66(1)} (2002), 14-18. 

\bibitem{maris}
\textbf{Maris, E. A., Göktaş, S.}, \textit{On the spectral properties of a Sturm-Liouville problem with eigenparameter in the boundary condition},Hacettepe Journal of Mathematics and Statistics, 49(4), 1373-1382.
\url{https://doi.org/10.15672/hujms.479445}

\bibitem{moller}
\textbf{Möller, M. }, \textit{Minimality of eigenfunctions and associated functions of ordinary differential operators},Adv. Oper. Theory 5, 1014–1025 (2020).
\url{https://doi.org/10.1007/s43036-020-00065-7}


\bibitem{naim}
\textbf{M. A. Naimark}, \textit{Linear differential operators},
2nd edn, Nauka, Moscow, 1969 (in Russian); English trans. of 1st
edn, Parts I,II, Ungar, New York, 1967, 1968.

\bibitem{olgar}
\textbf{H. Olǧar, O. Sh. Mukhtarov, and K. Aydemir. }, \textit{Some Properties of Eigenvalues and Generalized Eigenvectors of One Boundary Value Problem},
Filomat 32, no. 3 (2018): 911–20. \url{https://www.jstor.org/stable/27381776}



\bibitem{rus}
\textbf{E.M. Russakovskii},  Operator treatment of a boundary-value problem with the spectral parameter appearing rationally in the boundary conditions (in Russian). Theory Funct. Funct. Anal. Appl. 30, (1978) 120-128.

\bibitem{shkalikov1}
\textbf{A. A. Shkalikov}, \textit{Boundary value problems for ordinary differential equations with a parameter in the boundary conditions} (Russian. English summary), Tr. Semin. Im. I. G. Petrovskogo \textbf{9} (1983) 190-229.

\bibitem{shkalikov2}
\textbf{A. A. Shkalikov}, \textit{Basis Properties of Root Functions of Differential Operators with Spectral Parameter in the Boundary Conditions}, Diff. Equat., \textbf{55:5} (2019), 631–643.






\end{thebibliography}
\end{document}